\theoremstyle{plain}
\newtheorem{theorem}{Theorem}[section]
\newtheorem{lemma}[theorem]{Lemma}
\theoremstyle{definition}
\newtheorem{definition}[theorem]{Definition}
\newtheorem{assumption}[theorem]{Assumption}
\theoremstyle{remark}
\newtheorem{remark}{Remark}
\newcommand{\R}{\mathbb{R}}
\newcommand{\N}{\mathbb{N}}
\newcommand{\bigO}{\mathcal{O}}
\DeclareMathOperator*{\argmin}{arg\,min}
\newcommand{\grad}{\nabla}
\renewcommand{\Pr}{\mathbb{P}}
\renewcommand{\b}[1]{\bm{#1}} 
\newcommand{\bd}{\b{d}}
\newcommand{\bg}{\b{g}}
\newcommand{\bs}{\b{s}}
\newcommand{\bv}{\b{v}}
\newcommand{\bx}{\b{x}}
\newcommand{\by}{\b{y}}
\newcommand{\LR}[1]{{#1}}
\newcommand{\LRnew}[1]{{#1}}
\newcommand{\kappaef}{\kappa_{\textnormal{ef}}}
\newcommand{\kappaeg}{\kappa_{\textnormal{eg}}}
\newcommand{\kappaeh}{\kappa_{\textnormal{eh}}}
\newcommand{\overgammainc}{\overline{\gamma}_{\textnormal{inc}}}
\newcommand{\gammainc}{\gamma_{\textnormal{inc}}}
\newcommand{\gammadec}{\gamma_{\textnormal{dec}}}
\newcommand{\flow}{f_{\textnormal{low}}}
\newcommand{\padd}{p_{\textnormal{add}}}
\newcommand{\pdrop}{p_{\textnormal{drop}}}
\begin{document}

\articletype{}

\title{Randomized Subspace Derivative-Free Optimization with Quadratic Models and Second-Order Convergence}

\author{
\name{Coralia Cartis\textsuperscript{a} and Lindon Roberts\thanks{CONTACT L.~Roberts. Email: lindon.roberts@sydney.edu.au}\textsuperscript{b}}
\affil{\textsuperscript{a}Mathematical Institute, University of Oxford, Radcliffe Observatory Quarter, Woodstock Road, Oxford OX2 6GG, UK; \textsuperscript{b}School of Mathematics and Statistics, University of Sydney, Carslaw Building, Camperdown NSW 2006, Australia}
}

\maketitle

\begin{abstract}
We consider model-based derivative-free optimization (DFO) for large-scale problems, based on iterative minimization in random subspaces.
We provide the first worst-case complexity bound for such methods for convergence to approximate second-order critical points, and show that these bounds have significantly improved dimension dependence compared to standard full-space methods, provided low accuracy solutions are desired and/or the problem has low effective rank.
We also introduce a practical subspace model-based method suitable for general objective minimization, based on iterative quadratic interpolation in subspaces, and show that it can solve significantly larger problems than state-of-the-art full-space methods, while also having comparable performance on medium-scale problems when allowed to use full-dimension subspaces.
\end{abstract}

\begin{keywords}
 Derivative-free optimization; Large-scale optimization; Worst case complexity
\end{keywords}

\section{Introduction}

In this work, we consider the problem of smooth nonlinear optimization where derivatives of the objective are unavailable, so-called derivative-free optimization (DFO) \cite{Conn2009,Audet2017,Larson2019}.
DFO methods are widely used across many fields \cite{Alarie2021}, but in this work we focus on large-scale derivative-free problems, such as those arising from bilevel learning of variational models in imaging \cite{Ehrhardt2021}, fine-tuning large language models \cite{Malladi2023}, adversarial example generation for neural networks \cite{Ughi2022}, and as a possible proxy for global optimization methods in the high-dimensional setting \cite{Cartis2022Escaping}. 
Here we focus on the widely used class of model-based DFO methods, which are based on iteratively constructing local (interpolating) models for the objective.

Standard model-based methods \cite{Conn2009,Larson2019} suffer from a lack of scalability with respect to the problem dimension, coming from dimension-dependent interpolation error bounds affecting worst-case complexity, and significant per-iteration linear algebra costs;  thus, they have traditionally been limited to small- or medium-scale problems.
More recently, starting with \cite{Cartis2023}, model-based DFO methods based on iterative minimization in random subspaces has proven to be a promising approach for tackling large-scale DFO problems in theory and practice.
Theoretically, for example, using linear models in random subspaces improves the worst-case complexity of such methods from $O(n^3 \epsilon^{-2})$ to $O(n^2 \epsilon^{-2})$ evaluations to find an $\epsilon$-approximate stationary point, for a problem with $n$ decision variables.
This aligns with similar results for derivative-based optimization \cite{Cartis2022Zhen,Shao2021} and other DFO approaches, such as direct search methods with random models \cite{Gratton2015,Roberts2023} and for gradient sampling in randomly drawn subspaces \cite{Kozak2021}.
This model-based framework has been extended to handle stochastic objective values \cite{Dzahini2024b} and specialized to exploit structured \LR{quadratic} model construction approaches \cite{Chen2024}.
The case of direct search methods in random subspaces has also been extended to the stochastic and nonsmooth case, and to show convergence to second-order optimal points \cite{Dzahini2024}.

\LRnew{There are alternative approaches for large-scale DFO where iterations are performed in subspaces that are deterministically generated from information from previous iterations, such as \cite{Zhang2012,Kimiaei2023,VanDeBerg2024}, where the method from \cite[Chapter 5.3]{Zhang2012} alternates between iterations in two-dimensional subspaces and full-dimensional finite differencing. The method from \cite{Xie2024} uses partially random subspaces, similar to \cite{Chen2024}, but is specialized to two-dimensional subspaces. We also note \cite{Menickelly2023}, where a full-dimensional model is updated at each iteration only inside a randomly generated subspace.}

\subsection{Contributions}
Existing random subspace methods for model-based DFO have theoretical convergence to approximate first-order optimal points, but there is currently no theory handling second-order optimality in random subspace model-based DFO (with only \cite{Dzahini2024} handling the direct search case).
Although \cite{Chen2024} does provide a mechanism for constructing fully quadratic interpolation models in random subspaces, which is a necessary component of second-order convergence theory, only first-order convergence theory is given.
We also note the related work \cite{Blanchet2019}, which proves convergence to second-order optimal points for a trust-region method with probabilistic models and function values, which uses similar ideas but with different sources of uncertainty (instead of the approach here which uses randomly drawn subspaces with deterministic models and function values).

There are two main contributions in this work:
\begin{itemize}
    \item First, we show how the random subspace model-based method in \cite{Cartis2023} can be extended to allow convergence to second-order stationary points (similar to full space methods \cite{Conn2009,Garmanjani2015}), and prove a high probability worst-case complexity bound for this method. Our approach is based on adapting a recent approach for second-order convergence of (derivative-based) subspace adaptive regularization with cubics \cite{Shao2021,Cartis2024}.
    We show a worst-case complexity bound\footnote{The notation $\tilde{O}(\cdot)$ is the same as $O(\cdot)$ but hiding logarithmic factors as well as constants.} of $\tilde{O}(n^{4.5} \epsilon^{-3})$ iterations and evaluations to achieve second-order optimality level $\epsilon$ for an $n$-dimensional problem, a significant improvement on the $O(n^9 \epsilon^{-3})$ iterations and $O(n^{11} \epsilon^{-3})$ evaluations for full space methods.
    For this theory to hold, we need to consider either low accuracy solutions or problems with low effective rank (i.e.~low rank Hessians, such as fine-tuning large language models \cite{Malladi2023}).
    We note that the numerical evidence from \cite{Cartis2023} suggests that subspace methods are indeed most practical for finding low accuracy solutions.
    \item We present a new practical algorithm for random subspace model-based DFO based on quadratic interpolation models.
    In \cite{Cartis2023}, a practical approach based on linear interpolation models was built for nonlinear least-squares problems (a special case where linear models provide good practical performance \cite{Cartis2019}).
    Here, we extend this approach to handle general quadratic interpolation approaches, focusing in particular on underdetermined quadratic interpolation \cite{Powell2004} in subspaces.
    Compared to state-of-the-art full-space methods using similar model constructions \cite{Cartis2019b}, our approach has comparable performance when set to use full-dimensional subspaces, but when using low-dimensional subspaces is able to solve larger problems (of dimension $n\approx O(1000)$) where existing approaches fail due to the significant linear algebra costs at this scale.
\end{itemize}

\paragraph*{Structure}
Firstly, in Section~\ref{sec_background} we recall the  random subspace model-based approach from \cite{Cartis2023} and its associated first-order complexity results.
In Section~\ref{sec_second_order} we extend this approach to find second-order critical points, stating the theoretical algorithm and proving the associated worst-case complexity bounds.
Then, in Section~\ref{sec_implementation} we outline our practical subspace method based on quadratic model construction in subspaces, which we test numerically in Section~\ref{sec_numerics}.

\paragraph*{Notation}
We use $\|\cdot\|$ to denote the Euclidean norm of vectors in and operator 2-norm of matrices, and $B(\bx,\Delta) = \{\by : \|\by-\bx\| \leq \Delta\}$ is the closed ball of radius $\Delta$ centered at $\bx$.
We use $\tilde{O}(\cdot)$ to denote $O(\cdot)$ with both logarithmic and constant factors omitted.

\section{Background} \label{sec_background}
In this work we consider the problem
\begin{align}
    \min_{\bx\in\R^n} f(\bx), \label{eq_problem}
\end{align}
where $f:\R^n\to\R$ is continuously differentiable and nonconvex, but $\grad f$ is not available.
In standard model-based DFO, at iteration $k$, we construct a local quadratic approximation for $f$ that we hope to be accurate near the current iterate $\bx_k$:
\begin{align}
    f(\bx_k+\bs) \approx m_k(\bs) := f(\bx_k) + \bg_k^T \bs + \frac{1}{2}\bs^T H_k \bs,
\end{align}
for some $\bg_k\in\R^n$ and (symmetric) $H_k\in\R^{n\times n}$.
This is typically achieved by finding $\bg_k$ and $H_k$ to interpolate the value of $f$ at a collection of points near  $\bx_k$.
We then use this model inside a trust-region framework to ensure a globally convergent algorithm, where---compared to derivative-based trust-region methods---special care has to be taken to ensure that $m_k$ is sufficiently accurate and $\|\bg_k\|$ is not too small relative to $\Delta_k$, see e.g.~\cite{Conn2009} for details.

In the large-scale case, which would typically be $n\geq 1000$ for DFO, the linear algebra cost of model construction and management becomes significant, and the model error (relative to the true objective value) can grow rapidly.
To avoid this issue, in \cite{Cartis2023} it was proposed to perform each iteration within a randomly drawn, low-dimensional subspace. 
Specifically, for some $p\ll n$, at each iteration $k$ we generate a random $P_k\in\R^{n\times p}$, defining an affine space $\mathcal{Y}_k := \{\bx_k+P_k \hat{\bs} : \hat{\bs}\in\R^p\}$.
We then construct a low-dimensional model to approximate $f$ only on $\mathcal{Y}_k$,
\begin{align}
    f(\bx_k+P_k \hat{\bs}) \approx \hat{m}_k(\hat{\bs}) := f(\bx_k) + \hat{\bg}_k^T \bs + \frac{1}{2} \hat{\bs}^T \hat{H}_k \hat{\bs}, \label{eq_model_subspace}
\end{align}
for some $\hat{\bg}_k\in\R^p$ and (symmetric) $\hat{H}_k\in\R^{p\times p}$.
We will adopt the convention where variables with hats are in the low-dimensional space and those without hats are in the full space.
The trust-region framework is easily adapted to this situation: the model $\hat{m}_k$ is minimized subject to a ball constraint to get a tentative low-dimensional step $\hat{\bs}_k$, which in turn defines a new potential iterate $\bx_k + P_k\hat{\bs}_k$, which may be accepted or rejected via a sufficient decrease test.

To achieve first-order convergence, the model $\hat{m}_k$ \eqref{eq_model_subspace} must be sufficiently accurate in the following sense, \cite[Definition 1]{Cartis2023}.

\begin{definition} \label{def_subspace_fully_linear}
    Given $\bx\in\R^n$, $\Delta>0$ and $P\in\R^{n\times p}$, a model $\hat{m}:\R^p\to\R$ is $P$-fully linear in $B(\bx,\Delta)$ if
    \begin{subequations}
        \begin{align}
            |f(\bx+P\hat{\bs}) - \hat{m}(\hat{\bs})| &\leq \kappaef \Delta^2, \\
            \|P^T \grad f(\bx+P\hat{\bs}) - \grad \hat{m}(\hat{\bs})\| &\leq \kappaeg \Delta,
        \end{align}
    \end{subequations}
    for all $\hat{\bs}\in\R^p$ with $\|\hat{\bs}\|\leq\Delta$, where the constants $\kappaef,\kappaeg>0$ are independent of $\bx$, $\Delta$, $P$ and $\hat{m}$.
\end{definition}

\begin{algorithm}[tbh]
\begin{algorithmic}[1]
\Statex \textbf{Inputs:} initial iterate $\bx_0 \in \R^n$, trust-region radii $0 < \Delta_0 \leq \Delta_{\max}$, trust-region updating parameters $0 < \gammadec < 1 < \gammainc$, subspace dimension $p\in\{1,\ldots,n\}$, and success check parameters $\eta\in(0,1)$ and $\mu>0$.
\vspace{0.2em}
\For{$k=0,1,2,\ldots$}
   \State  \LR{Generate a random matrix $P_k\in\R^{n\times p}$ satisfying Assumption \ref{ass_well_aligned}.}
    \State \LR{Build a model $\hat{m}_k$ \eqref{eq_model_subspace} which is $P_k$-fully linear in $B(\bx_k,\Delta_k)$.}
    \State Calculate a step by approximately solving the trust-region subproblem
    \begin{align}
        \hat{\bs}_k \approx \argmin_{\hat{\bs}\in\R^p} \hat{m}_k(\hat{\bs}), \quad \text{s.t.} \:\: \|\hat{\bs}\|\leq\Delta_k. \label{eq_trs}
    \end{align}
    \State Evaluate $f(\bx_k+P_k\hat{\bs}_k)$ and calculate the ratio
    \begin{align}
        R_k := \frac{f(\bx_k) - f(\bx_k+P_k\hat{\bs}_k)}{\hat{m}_k(\bm{0}) - \hat{m}_k(\hat{\bs}_k)}. \label{eq_ratio_test}
    \end{align}
    \If{$R_k\geq\eta$ and $\|\hat{\bg}_k\| \geq \mu \Delta_k$}
        \State (\textit{successful}) Set $\bx_{k+1}=\bx_k+P_k\hat{\bs}_k$ and $\Delta_{k+1}=\min(\gammainc\Delta_k,\Delta_{\max})$.
    \Else 
        \State (\textit{unsuccessful}) Set $\bx_{k+1}=\bx_k$ and $\Delta_{k+1}=\gammadec\Delta_k$.
    \EndIf
\EndFor
\end{algorithmic}
\caption{Random subspace DFO algorithm applied to \eqref{eq_problem}, a simplified version of \cite[Algorithm 1]{Cartis2023} based on \cite[Algorithm 2.1]{Dzahini2024b}.}
\label{alg_rsdfo}
\end{algorithm}

A prototypical random subspace algorithm is presented in Algorithm~\ref{alg_rsdfo}.
It is a simplification of the original algorithm \cite[Algorithm 1]{Cartis2023}, which includes a more sophisticated trust-region management procedure based on \cite{Powell2009} and allows the model $\hat{m}_k$ to not be $P_k$-fully linear at all iterations.
The version presented here is based on \cite[Algorithm 2.1]{Dzahini2024b} and has the same first-order convergence guarantees.

In order to prove convergence of Algorithm~\ref{alg_rsdfo}, we need the following standard assumptions on the objective function, model and trust-region subproblem calculation \eqref{eq_trs}.

\begin{assumption} \label{ass_smoothness}
    The objective function $f:\R^n\to\R$ in \eqref{eq_problem} is bounded below by $\flow$ and is continuously differentiable, where $\grad f$ is $L_{\grad f}$-Lipschitz continuous for some $L_{\grad f}>0$.
\end{assumption}

\begin{assumption} \label{ass_bounded_hess}
    The model Hessians $\hat{H}_k$ are uniformly bounded, $\|\hat{H}_k\|\leq \kappa_H$ for all $k$, for some $\kappa_H\geq 1$.
\end{assumption}

\begin{assumption} \label{ass_cauchy_decrease}
    The computed solution $\hat{\bs}_k$ in \eqref{eq_trs} satisfies the Cauchy decrease condition
    \begin{align}
        \hat{m}_k(\bm{0}) - \hat{m}_k(\hat{\bs}_k) \geq c_1 \|\hat{\bg}_k\| \min\left(\Delta_k, \frac{\|\hat{\bg}_k\|}{\max(\|\hat{H}_k\|, 1)}\right),
    \end{align}
    for some $c_1\in[1/2,1]$ independent of $k$.
\end{assumption}

While the above assumptions are common for trust-region methods, we need an additional assumption on the random matrices $P_k$ defining the subspace at each iteration.

\begin{assumption} \label{ass_well_aligned}
    The matrices $P_k$ are uniformly bounded, $\|P_k\|\leq P_{\max}$ for all $k$, and at each iteration $P_k$ is \emph{well-aligned}, i.e.~
    \begin{align}
        \Pr\left[\|P_k^T \grad f(\bx_k)\| \geq \alpha \|\grad f(\bx_k)\|\right] \geq 1-\delta, \label{eq_well_aligned}
    \end{align}
    for some $\alpha>0$ and $\delta\in(0,1)$ independent of $k$ and all previous iterations $P_0,\ldots,P_{k-1}$.
\end{assumption}

Although  condition \eqref{eq_well_aligned} depends on $\grad f(\bx_k)$, which is not available, Assumption~\ref{ass_well_aligned} can still be satisfied using \emph{Johnson-Lindenstrauss transforms} \cite{Woodruff2014}.
For example, $P_k$ may have independent $N(0,1/p)$ entries or be the first $p$ columns of a random orthogonal $n\times n$ matrix scaled by $\sqrt{n/p}$.
More details on how Assumption~\ref{ass_well_aligned} may be satisfied can be found in \LR{\cite[Chapter 4]{Shao2021}, \cite[Section 3.2]{Cartis2022Zhen},} \cite[Section 2.6]{Cartis2023}, \cite[Section 3.1]{Dzahini2024b}, \cite[Section 3.3]{Roberts2023}, or \cite[Section 2.5]{Chen2024}, for example.
Most crucially, for such constructions, \eqref{eq_well_aligned} can be satisfied for $p=\bigO(1)$ independent of $n$.

Under these assumptions, Algorithm~\ref{alg_rsdfo} has a first-order worst-case complexity bound of the following form (e.g.~\cite[Corollary 1]{Cartis2023} or \cite[Theorem 9]{Chen2024}).

\begin{theorem} \label{thm_first_order_complexity}
    Suppose Assumptions~\ref{ass_smoothness}, \ref{ass_bounded_hess}, \ref{ass_cauchy_decrease} and \ref{ass_well_aligned} hold.
    Suppose also that $\gammainc > \gammadec^{-2}$ and $\delta$ sufficiently small in Assumption~\ref{ass_well_aligned}.
    Then for $k$ sufficiently large, the iterates of Algorithm~\ref{alg_rsdfo} satisfy
    \begin{align}
        \Pr\left[\min_{j\leq k} \|\grad f(\bx_j)\| \leq \frac{C\sqrt{\kappa_{H}}\, \kappa_d}{\sqrt{k}}\right] \geq 1-e^{-ck},
    \end{align}
    for constants $c,C>0$, where $\kappa_d :=\max(\kappaef,\kappaeg)$ from Definition~\ref{def_subspace_fully_linear}.
\end{theorem}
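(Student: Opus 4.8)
The plan is to follow the standard template for probabilistic trust-region methods (as in \cite{Cartis2023,Dzahini2024b} and the references therein): prove a \emph{deterministic} trust-region counting estimate that holds whenever "enough" iterations are well-behaved, prove a \emph{concentration} bound guaranteeing that enough iterations are well-behaved, and join the two. The first move is to classify iterations: call iteration $k$ \emph{true} if the drawn matrix $P_k$ is well-aligned at $\bx_k$ in the sense of \eqref{eq_well_aligned}. Because Algorithm~\ref{alg_rsdfo} builds a $P_k$-fully linear model at \emph{every} iteration (line~3), this is the only randomness that needs tracking, and by Assumption~\ref{ass_well_aligned} iteration $k$ is true with probability at least $1-\delta$ conditionally on the past $P_0,\dots,P_{k-1}$; let $T_k$ be the number of true iterations among the first $k$.

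Next I would show that true iterations make guaranteed progress. Combining the gradient estimate in Definition~\ref{def_subspace_fully_linear} with \eqref{eq_well_aligned} gives $\|\hat\bg_k\| \geq \alpha\|\grad f(\bx_k)\| - \kappaeg\Delta_k$, so once $\Delta_k$ is below a fixed multiple of $\|\grad f(\bx_k)\|$ we have $\|\hat\bg_k\| \geq \tfrac{\alpha}{2}\|\grad f(\bx_k)\|$, in particular $\|\hat\bg_k\| \geq \mu\Delta_k$. The classical ratio estimate then applies: since $\hat m_k(\bm{0}) = f(\bx_k)$ exactly, $|R_k - 1| = |f(\bx_k + P_k\hat\bs_k) - \hat m_k(\hat\bs_k)|/(\hat m_k(\bm{0}) - \hat m_k(\hat\bs_k))$, and bounding the numerator by $\kappaef\Delta_k^2$ (Definition~\ref{def_subspace_fully_linear}) and the denominator below by $c_1\|\hat\bg_k\|\min(\Delta_k,\|\hat\bg_k\|/\kappa_H)$ (Assumptions~\ref{ass_bounded_hess} and \ref{ass_cauchy_decrease}) yields $R_k \geq \eta$ once $\Delta_k$ is small relative to $\|\grad f(\bx_k)\|$. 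Hence there is a constant $c_\Delta > 0$ depending only on $\alpha,\mu,\eta,c_1,\kappa_H,\kappaef,\kappaeg$ (and with $c_\Delta \propto 1/\kappa_d$) such that every true iteration $k$ with $\Delta_k \leq c_\Delta\|\grad f(\bx_k)\|$ is successful; and every successful iteration gives $f(\bx_k) - f(\bx_{k+1}) \geq \eta(\hat m_k(\bm{0}) - \hat m_k(\hat\bs_k)) \gtrsim \Delta_k^2/\kappa_H$ via $\|\hat\bg_k\| \geq \mu\Delta_k$ and Cauchy decrease.

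For the deterministic count, fix $k$ and work on the event $E_\epsilon := \{\min_{j\leq k}\|\grad f(\bx_j)\| > \epsilon\}$; with $\bar\Delta := c_\Delta\epsilon$, every true iteration $j\leq k$ with $\Delta_j \leq \bar\Delta$ is successful, equivalently every unsuccessful iteration with $\Delta_j \leq \bar\Delta$ is non-true. Telescoping $f(\bx_0) - \flow \geq \sum_{j\,\mathrm{successful}}(f(\bx_j) - f(\bx_{j+1}))$ with the decrease bound above controls the number of successful iterations with $\Delta_j \geq \bar\Delta$ by $O(\kappa_H(f(\bx_0)-\flow)/\bar\Delta^2) = O(\kappa_H\kappa_d^2(f(\bx_0)-\flow)/\epsilon^2)$. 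The radius recursion $\Delta_{k+1} \in \{\gammainc\Delta_k,\gammadec\Delta_k\}$ together with $\Delta_k \leq \Delta_{\max}$ and the hypothesis $\gammainc > \gammadec^{-2}$ (so one success outweighs two failures in $\log\Delta_k$) then controls the number of unsuccessful iterations, and of successful iterations with $\Delta_j < \bar\Delta$, in terms of the previous quantity, of $\log(\Delta_{\max}/\Delta_0)$, and of the number $k - T_k$ of non-true iterations; putting these together gives a bound of the shape $k \leq C_1\kappa_H\kappa_d^2/\epsilon^2 + C_2(k - T_k) + C_3$ (with the $\log(\Delta_{\max}/\bar\Delta)$ bookkeeping for large-radius iterations swept into "$k$ sufficiently large").

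Finally, since each iteration is true with conditional probability at least $1-\delta$ regardless of the past, $T_k$ stochastically dominates a $\mathrm{Bin}(k,1-\delta)$ variable, so a Chernoff bound gives $T_k \geq (1-\tfrac{1}{2C_2})k$ with probability at least $1-e^{-ck}$ provided $1-\delta$ exceeds $1-\tfrac{1}{2C_2}$ by a fixed margin — this is precisely "$\delta$ sufficiently small". On the intersection of that event with $E_\epsilon$, Step~3 forces $\tfrac12 k \leq C_1\kappa_H\kappa_d^2/\epsilon^2 + C_3$, so for $k$ large and $C$ suitably large the choice $\epsilon = C\sqrt{\kappa_H}\,\kappa_d/\sqrt{k}$ makes $E_\epsilon$ impossible on the high-probability event, which is the claim. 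I expect the deterministic count to be the main obstacle: non-true iterations can be unsuccessful even at arbitrarily small $\Delta_k$ and hence drive $\Delta_k$ down without bound, so the unsuccessful (and small-radius) iterations cannot be controlled by the $\Delta_k$-walk alone — only the combination of $\gammainc > \gammadec^{-2}$ with the \emph{density} of true iterations keeps the total count proportional to the number of productive successful iterations, and making the large-radius bookkeeping clean enough to land an $O$ rather than $\tilde O$ bound is the delicate part (hence the "$k$ sufficiently large" qualifier).
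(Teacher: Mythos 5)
Your proposal is correct and follows essentially the same route as the paper: the paper does not prove Theorem~\ref{thm_first_order_complexity} itself (it cites \cite[Corollary 1]{Cartis2023} and \cite[Theorem 9]{Chen2024}), but the argument you outline — deterministic counting of successful/unsuccessful and large-/small-radius iterations on the event that the gradient stays above $\epsilon$, the $\log\Delta_k$ random-walk bookkeeping under $\gammainc>\gammadec^{-2}$, and a Chernoff bound on the number of well-aligned iterations — is exactly the template of that cited proof, and is reproduced in detail in the paper's second-order analogue (Lemmas~\ref{lem_success_bound}--\ref{lem_bad_aligned_chernoff} and Theorem~\ref{thm_wcc_2}). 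No gaps; your identification of the small-radius unsuccessful iterations as necessarily non-aligned, and of the density of aligned iterations as the ingredient that closes the count, matches the paper's Lemmas~\ref{lem_unsuccess_bound} and \ref{lem_aligned_bound}.
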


This result implies that $\|\grad f(\bx_k)\|$ is driven below $\epsilon$ for the first time after $\bigO(\kappa_H \kappa_d^2 \epsilon^{-2})$ iterations and $\bigO(p \kappa_H \kappa_d^2 \epsilon^{-2})$ objective evaluations (with high probability), assuming $\bigO(p)$ interpolation points are used to construct $\hat{m}_k$.
Assuming linear interpolation models, we have $\kappa_d=\bigO(P_{\max}^2 p)$ and $\kappa_H=1$, so in the full-space case $p=n$ and $P_k=I_{n\times n}$ we get a worst-case complexity of $\bigO(n^2 \epsilon^{-2})$ iterations and $\bigO(n^3 \epsilon^{-2})$ objective evaluations.
However, using a Johnson-Lindenstrauss transform we may use $p=\bigO(1)$ and $P_{\max}=\bigO(\sqrt{n})$, giving the same iteration complexity but an improved complexity of $\bigO(n^2 \epsilon^{-2})$ evaluations.

\section{Convergence to Second-Order Stationary Points} \label{sec_second_order}
We now consider the extension of the framework in Section~\ref{sec_background} to a second-order convergent algorithm.
Our basic framework is the same, namely at each iteration we select a random $P_k$ and build a low-dimensional model $\hat{m}_k$ \eqref{eq_model_subspace} to approximate $f$ in the corresponding affine space $\mathcal{Y}_k$.

Now, we are interested in the second-order criticality measure
\begin{align}
    \sigma_k  := \max(\|\grad f(\bx_k)\|, \tau_k), \qquad\text{where}\qquad \tau_k := \max(-\lambda_{\min}(\grad^2 f(\bx_k)), 0)),
\end{align}
(see \cite[Chapter 10]{Conn2009}, for example).
Restricting the objective to the space $\mathcal{Y}_k$ we get the corresponding subspace criticality measure 
\begin{align}
    \hat{\sigma}_k  := \max(\|P_k^T\grad f(\bx_k)\|, \hat{\tau}_k), \qquad\text{where}\qquad \hat{\tau}_k := \max(-\lambda_{\min}(P_k^T \grad^2 f(\bx_k) P_k), 0)),
\end{align}
and our interpolation model $\hat{m}_k$ defines its own approximate (subspace) criticality measure
\begin{align}
    \hat{\sigma}^m_k  := \max(\|\hat{\bg}_k\|, \hat{\tau}^m_k), \qquad\text{where}\qquad \hat{\tau}^m_k := \max(-\lambda_{\min}(\hat{H}_k), 0)).
\end{align}
To achieve $\sigma_k\to 0$ (i.e.~convergence to second-order critical points), we need a more restrictive notion of model accuracy.


\begin{definition} \label{def_fully_quadratic}
    The subspace model $\hat{m}_k:\R^p\to\R$ in \eqref{eq_model_subspace} is $P_k$-fully quadratic if there exist constants $\kappaef,\kappaeg,\kappaeh$ independent of $k$ such that
    \begin{subequations}
    \begin{align}
        |f(\bx_k + P_k\hat{\bs}) - \hat{m}_k(\hat{\bs})| &\leq \kappaef \Delta_k^3, \\
        \|P_k^T \grad f(\bx_k+P_k\hat{\bs}) - \grad \hat{m}_k(\hat{\bs})\| &\leq \kappaeg \Delta_k^2, \\
        \|P_k^T \grad^2 f(\bx_k+P_k\hat{\bs}) P_k - \grad^2 \hat{m}_k(\hat{\bs})\| &\leq \kappaeh \Delta_k,
    \end{align}
    \end{subequations}
    for all $\|\hat{\bs}\| \leq \Delta_k$.
\end{definition}

This notion is discussed in \cite[Section 2.4]{Chen2024}, and a specific sampling mechanism for achieving this is described there.

\begin{algorithm}[tbh]
\begin{algorithmic}[1]
\Statex \textbf{Inputs:} initial iterate $\bx_0 \in \R^n$ and trust-region radius $\Delta_0>0$, and subspace dimension $p\in\{1,\ldots,n\}$.
\Statex \underline{Parameters:} maximum trust-region radius $\Delta_{\max} \geq \Delta_0$, trust-region parameters $0 < \gammadec < 1 < \gammainc$, and step acceptance thresholds $\eta\in(0,1)$ and $\mu>0$.
\vspace{0.2em}
\For{$k=0,1,2,\ldots$}
    \State Select a random matrix $P_k\in\R^{n\times p}$ satisfying Assumption~\ref{ass_prob_well_aligned}
    \State Build subspace model $\hat{m}_k$ \eqref{eq_model_subspace} which is $P_k$-fully quadratic (Definition~\ref{def_fully_quadratic}).
    \State Calculate a step $\hat{\bs}_k$ by approximately solving the trust-region subproblem \eqref{eq_trs}.
    \State Evaluate $f(\bx_k+P_k\hat{\bs}_k)$ and calculate the ratio $R_k$ \eqref{eq_ratio_test}.
    \If{$R_k \geq \eta$ and $\hat{\sigma}^m_k \geq \mu \Delta_k$}
        \State \textit{(successful)} Set $\bx_{k+1}=\bx_k+P_k\hat{\bs}_k$ and $\Delta_{k+1}=\min(\gammainc \Delta_k, \Delta_{\max})$.
    \Else
        \State \textit{(unsuccessful)} Set $\bx_{k+1}=\bx_k$ and $\Delta_{k+1}=\gammadec \Delta_k$.
    \EndIf
\EndFor
\end{algorithmic}
\caption{Random-subspace model-based DFO algorithm applied to  \eqref{eq_problem} with second-order convergence.}
\label{alg_rsdfo2}
\end{algorithm}

In addition, we need the following assumptions on the objective and computed trust-region step which are standard for second-order convergence (e.g.~\cite[Chapter 10]{Conn2009}).

\begin{assumption} \label{ass_smoothness_2}
    The objective $f$ is bounded below by $\flow$, and is twice continuously differentiable and $\grad^2 f$ is $L_H$-Lipschitz continuous.
    Additionally, the Hessian at all iterates is uniformly bounded, $\|\grad^2 f(\bx_k)\| \leq M$ for some $M>0$ independent of $k$.
\end{assumption}

\begin{assumption} \label{ass_cauchy_decrease_2}
    The step $\hat{\bs}_k$ satisfies
    \begin{align}
        \hat{m}_k(\bm{0}) - \hat{m}_k(\hat{\bs}_k) \geq \kappa_s \max\left(\|\hat{\bg}_k\| \min\left(\Delta_k, \frac{\|\hat{\bg}_k\|}{\max(\|\hat{H}_k\|,1)}\right), \hat{\tau}^m_k\Delta_k^2\right),
    \end{align}
    for some $\kappa_s>0$ independent of $k$.
\end{assumption}

We also require Assumption~\ref{ass_bounded_hess} (bounded model Hessians) holds here.

\begin{lemma}[Lemma 10.15, \cite{Conn2009}] \label{lem_fully_quadratic}
    Suppose Assumption~\ref{ass_smoothness_2} holds and $\hat{m}_k$ is $P_k$-fully quadratic on $B(\bx_k,\Delta_k)$.
    Then $|\hat{\sigma}_k - \hat{\sigma}^m_k| \leq \kappa_{\sigma} \Delta_k$, 
    where $\kappa_{\sigma} := \max(\kappaeg \Delta_{\max}, \kappaeh)$.

\end{lemma}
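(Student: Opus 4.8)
The plan is to bound the two quantities $\hat\sigma_k$ and $\hat\sigma_k^m$ componentwise, using the fully-quadratic error bounds from Definition~\ref{def_fully_quadratic} evaluated at $\hat\bs=\bm 0$. First I would recall that $\hat\sigma_k = \max(\|P_k^T\grad f(\bx_k)\|,\hat\tau_k)$ and $\hat\sigma_k^m = \max(\|\hat\bg_k\|,\hat\tau_k^m)$, so it suffices to control $\big|\,\|P_k^T\grad f(\bx_k)\| - \|\hat\bg_k\|\,\big|$ and $|\hat\tau_k - \hat\tau_k^m|$ separately and then use the elementary fact that $|\max(a,b)-\max(c,d)|\le \max(|a-c|,|b-d|)$.

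For the gradient term, the second bound in Definition~\ref{def_fully_quadratic} at $\hat\bs=\bm 0$ gives $\|P_k^T\grad f(\bx_k) - \hat\bg_k\| \le \kappaeg\Delta_k^2$, and the reverse triangle inequality yields $\big|\,\|P_k^T\grad f(\bx_k)\| - \|\hat\bg_k\|\,\big| \le \kappaeg\Delta_k^2 \le \kappaeg\Delta_{\max}\Delta_k$, using $\Delta_k\le\Delta_{\max}$. For the curvature term, the third bound at $\hat\bs=\bm 0$ gives $\|P_k^T\grad^2 f(\bx_k)P_k - \hat H_k\| \le \kappaeh\Delta_k$; since $\lambda_{\min}$ is $1$-Lipschitz with respect to the operator norm (by Weyl's inequality), $|\lambda_{\min}(P_k^T\grad^2 f(\bx_k)P_k) - \lambda_{\min}(\hat H_k)| \le \kappaeh\Delta_k$, and because $t\mapsto\max(-t,0)$ is $1$-Lipschitz this passes to $|\hat\tau_k - \hat\tau_k^m|\le\kappaeh\Delta_k$. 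Combining the two via the max inequality gives $|\hat\sigma_k - \hat\sigma_k^m|\le \max(\kappaeg\Delta_{\max},\kappaeh)\Delta_k = \kappa_\sigma\Delta_k$, as claimed.

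There is essentially no hard step here — it is a direct consequence of the fully-quadratic bounds at a single point together with two standard Lipschitz facts (reverse triangle inequality and Weyl/$\lambda_{\min}$ perturbation). The only mild subtlety worth stating explicitly is why Assumption~\ref{ass_smoothness_2} is invoked at all: it is needed to guarantee that $\hat\tau_k$ (equivalently $\lambda_{\min}(P_k^T\grad^2 f(\bx_k)P_k)$) is well-defined and finite, i.e.\ that $\grad^2 f(\bx_k)$ exists and is bounded, so that the $P_k$-fully-quadratic property (which presupposes the existence of $\grad^2 f$ on $B(\bx_k,\Delta_k)$) is meaningful. Since the statement is cited directly from \cite[Lemma 10.15]{Conn2009}, I would keep the argument to the few lines above rather than reproving the perturbation lemmas from scratch.
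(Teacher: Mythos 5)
Your proof is correct and follows essentially the same route as the cited source (the paper itself gives no proof, deferring to \cite[Lemma 10.15]{Conn2009}, whose argument is exactly this: apply the fully quadratic bounds at $\hat{\bs}=\bm{0}$, use the reverse triangle inequality and Weyl's inequality, and combine via $|\max(a,b)-\max(c,d)|\leq\max(|a-c|,|b-d|)$). The only implicit ingredient worth noting is that $\Delta_k\leq\Delta_{\max}$, which the trust-region update in Algorithm~\ref{alg_rsdfo2} guarantees, justifying the step $\kappaeg\Delta_k^2\leq\kappaeg\Delta_{\max}\Delta_k$.
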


\begin{lemma} \label{lem_small_delta_successful}
    Suppose Assumptions~\ref{ass_bounded_hess} and \ref{ass_cauchy_decrease_2}  hold.
    If
    \begin{align}
        \Delta_k \leq c_0\, \hat{\sigma}^m_k, \qquad \text{where} \qquad c_0 := \min\left(\frac{1}{\mu}, \frac{1}{\kappa_H}, \frac{\kappa_s (1-\eta)}{\kappaef \Delta_{\max}}, \frac{\kappa_s (1-\eta)}{\kappaef}\right),
    \end{align}
    then iteration $k$ is successful (i.e.~$R_k \geq \eta$ and $\hat{\sigma}^m_k \geq \mu\Delta_k$).
\end{lemma}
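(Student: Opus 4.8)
The plan is to check the two conditions defining a successful iteration in turn. The condition $\hat{\sigma}^m_k \ge \mu\Delta_k$ is immediate from $c_0 \le 1/\mu$: the hypothesis $\Delta_k \le c_0\hat{\sigma}^m_k$ gives $\mu\Delta_k \le \mu c_0\hat{\sigma}^m_k \le \hat{\sigma}^m_k$. This also forces $\hat{\sigma}^m_k > 0$ (since $\Delta_k>0$), so the ratios appearing below are well defined. It then remains to establish $R_k \ge \eta$, which I would do by bounding $|1 - R_k|$.

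First I would rewrite $1 - R_k$. Writing $D := \hat{m}_k(\bm 0) - \hat{m}_k(\hat{\bs}_k)$ for the predicted decrease and using $\hat{m}_k(\bm 0) = f(\bx_k)$ from \eqref{eq_model_subspace}, one obtains
\[
1 - R_k = \frac{D - \bigl(f(\bx_k) - f(\bx_k + P_k\hat{\bs}_k)\bigr)}{D} = \frac{f(\bx_k + P_k\hat{\bs}_k) - \hat{m}_k(\hat{\bs}_k)}{D},
\]
so that, since $\|\hat{\bs}_k\| \le \Delta_k$, the first estimate in Definition~\ref{def_fully_quadratic} yields $|1 - R_k| \le \kappaef\Delta_k^3 / D$. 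Everything then reduces to lower-bounding $D$ via Assumption~\ref{ass_cauchy_decrease_2}, for which I would split into two cases according to which term realises $\hat{\sigma}^m_k = \max(\|\hat{\bg}_k\|, \hat{\tau}^m_k)$.

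In the gradient-dominated case $\hat{\sigma}^m_k = \|\hat{\bg}_k\|$, since $c_0 \le 1/\kappa_H$ and $\max(\|\hat{H}_k\|,1) \le \kappa_H$ by Assumption~\ref{ass_bounded_hess}, the hypothesis gives $\|\hat{\bg}_k\| \ge \Delta_k/c_0 \ge \kappa_H\Delta_k \ge \max(\|\hat{H}_k\|,1)\Delta_k$; hence the inner $\min$ in Assumption~\ref{ass_cauchy_decrease_2} equals $\Delta_k$ and $D \ge \kappa_s\|\hat{\bg}_k\|\Delta_k = \kappa_s\hat{\sigma}^m_k\Delta_k$. Combining this with the bound above and with $\Delta_k^2/\hat{\sigma}^m_k = \Delta_k(\Delta_k/\hat{\sigma}^m_k) \le \Delta_k c_0 \le \Delta_{\max}c_0$, I get $|1 - R_k| \le \kappaef\Delta_{\max}c_0/\kappa_s \le 1 - \eta$ by the third term in the definition of $c_0$. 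In the curvature-dominated case $\hat{\sigma}^m_k = \hat{\tau}^m_k$, Assumption~\ref{ass_cauchy_decrease_2} gives $D \ge \kappa_s\hat{\tau}^m_k\Delta_k^2 = \kappa_s\hat{\sigma}^m_k\Delta_k^2$, so $|1 - R_k| \le \kappaef\Delta_k/(\kappa_s\hat{\sigma}^m_k) \le \kappaef c_0/\kappa_s \le 1 - \eta$ by the fourth term in the definition of $c_0$. In both cases $R_k \ge \eta$, so iteration $k$ is successful.

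I do not anticipate a genuine obstacle here: the argument is a routine trust-region ``small radius forces success'' estimate. The only points that need care are making the inner minimum in the Cauchy condition collapse to $\Delta_k$ in the gradient-dominated case (which is exactly why $c_0 \le 1/\kappa_H$ is imposed), and remembering to bound $\Delta_k \le \Delta_{\max}$ in that same case, which accounts for the extra $\Delta_{\max}$ in the third term of $c_0$ that is absent from the fourth.
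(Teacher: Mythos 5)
Your proof is correct and follows essentially the same route as the paper's: verify $\hat{\sigma}^m_k \geq \mu\Delta_k$ from $c_0 \leq 1/\mu$, bound $|1-R_k|$ by $\kappaef\Delta_k^3/D$ using the fully quadratic error bound, and lower-bound the predicted decrease $D$ via the Cauchy condition in the two cases $\hat{\sigma}^m_k = \|\hat{\bg}_k\|$ (where $c_0 \leq 1/\kappa_H$ collapses the inner minimum to $\Delta_k$) and $\hat{\sigma}^m_k = \hat{\tau}^m_k$, invoking the third and fourth terms of $c_0$ respectively. The only cosmetic difference is that you make explicit the role of $\max(\|\hat H_k\|,1)$ and the $\Delta_k\le\Delta_{\max}$ step, which the paper leaves slightly more implicit.
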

\begin{proof}
    This argument is based on \cite[Lemma 10.17]{Conn2009}.
    First, since $c_0 \leq 1/\mu$, we immediately have $\hat{\sigma}^m_k \geq \mu\Delta_k$; it remains to show $R_k \geq \eta$.
    By definition of $\hat{\sigma}^m_k$, we either have $\hat{\sigma}^m_k = \|\hat{\bg}_k\|$ or $\hat{\sigma}^m_k = \hat{\tau}^m_k$.
    From Assumptions~\ref{ass_cauchy_decrease_2} and \ref{ass_bounded_hess}, we have
    \begin{align}
        \hat{m}_k(\bm{0}) - \hat{m}_k(\hat{\bs}_k) \geq \kappa_s \max\left(\|\hat{\bg}_k\| \min\left(\Delta_k, \frac{\|\hat{\bg}_k\|}{\kappa_H}\right), \hat{\tau}^m_k\Delta_k^2\right).
    \end{align}
    First, if $\hat{\sigma}^m_k = \|\hat{\bg}_k\|$, then $\Delta_k \leq \frac{1}{\kappa_H}\hat{\sigma}^m_k = \frac{1}{\kappa_H}\|\hat{\bg}_k\|$ implies
    \begin{align}
        \hat{m}_k(\bm{0}) - \hat{m}_k(\hat{\bs}_k) \geq \kappa_s \|\hat{\bg}_k\| \Delta_k = \kappa_s \hat{\sigma}^m_k \Delta_k. \label{eq_cauchy_case1}
    \end{align}
    Then we have
    \begin{align}
        |R_k-1| &= \frac{|f(\bx_k+P_k\hat{\bs}_k) - \hat{m}_k(\hat{\bs}_k)|}{|\hat{m}_k(\bm{0}) - \hat{m}_k(\hat{\bs}_k)|}, \\
        &\leq \frac{\kappaef \Delta_k^3}{\kappa_s \hat{\sigma}^m_k \Delta_k}, \\
        &\leq \frac{\kappaef \Delta_{\max}}{\kappa_s \hat{\sigma}^m_k} \Delta_k,
    \end{align}
    and so $|R_k-1| \leq 1-\eta$ (by definition of $c_0$), and hence $R_k \geq \eta$ as required.
    Alternatively, if $\hat{\sigma}^m_k = \hat{\tau}^m_k$, then
    \begin{align}
        \hat{m}_k(\bm{0}) - \hat{m}_k(\hat{\bs}_k) \geq \kappa_s \hat{\tau}^m_k\Delta_k^2 = \kappa_s \hat{\sigma}^m_k \Delta_k^2. \label{eq_cauchy_case2}
    \end{align}
    In this case, we get
    \begin{align}
        |R_k-1| &= \frac{|f(\bx_k+P_k\hat{\bs}_k) - \hat{m}_k(\hat{\bs}_k)|}{|\hat{m}_k(\bm{0}) - \hat{m}_k(\hat{\bs}_k)|} \leq \frac{\kappaef \Delta_k^3}{\kappa_s \hat{\sigma}^m_k \Delta_k^2} = \frac{\kappaef \Delta_k}{\kappa_s \hat{\sigma}^m_k},
    \end{align}
    and so again $|R_k-1| \leq 1-\eta$ by definition of $c_0$, and $R_k \geq \eta$.
\end{proof}

Now, we need to specify our requirements for a well-aligned subspace.
This is based on the approach in \cite{Cartis2024}, originally from the PhD thesis \cite[Chapter 5.6]{Shao2021}.
At iteration $k$, suppose $\grad^2 f(\bx_k)$ has eigendecomposition $\grad^2 f(\bx_k) = \sum_{i=1}^{r} \lambda_i \bv_i \bv_i^T$, where $\lambda_1 \geq \lambda_2 \geq \cdots \geq \lambda_r$ and $r := \operatorname{rank}(\grad^2 f(\bx_k))\leq n$, and $\bv_1,\ldots,\bv_r$ are orthonormal.
Define $\hat{\bv}_i := P_k^T \bv_i \in \R^p$, and so the (true) subspace Hessian is $P_k^T \grad^2 f(\bx_k) P_k = \sum_{i=1}^{r} \lambda_i \hat{\bv}_i \hat{\bv}_i^T$.

\begin{definition}\label{2:well-aligned}
    A subspace $P_k$ is well-aligned if
    \begin{subequations} \label{eq_well_aligned_2}
    \begin{align}
        \|P_k\| &\leq P_{\max}, \label{eq_well_aligned_2a} \\
        \|P_k^T \grad f(\bx_k)\| &\geq (1-\alpha) \|\grad f(\bx_k)\|, \label{eq_well_aligned_2b} \\
        \|\hat{\bv}_r\| &\geq 1-\alpha, \label{eq_well_aligned_2c} \\
        (\hat{\bv}_i^T \hat{\bv}_r)^2 &\leq 4\alpha^2, \qquad \forall i=1,\ldots,r-1, \label{eq_well_aligned_2d}
    \end{align}
    \end{subequations}
    for some $\alpha\in(0,1)$ and $P_{\max}>0$ independent of $k$.
\end{definition}

In practice, a well-aligned subspace can be achieved by generating $P_k$ randomly (see Section~\ref{sec_well_aligned} below).
Hence, we use the following assumption.

\begin{assumption} \label{ass_prob_well_aligned}
    At any iteration $k$, the matrix $P_k$ is well-aligned in the sense of Definition \ref{2:well-aligned} with probability at least $1-\delta_S$, for some $\delta_S\in[0,1)$, independent of all previous iterations $P_0,\ldots,P_{k-1}$.
\end{assumption}

The first two well-aligned properties \eqref{eq_well_aligned_2a} and \eqref{eq_well_aligned_2b} match the first-order case (Assumption~\ref{ass_well_aligned}); i.e.~the subspace sees a sizeable fraction of the true gradient.
The third condition \eqref{eq_well_aligned_2c} says the subspace sees a sizeable fraction of the left-most eigenvector (since $\|\bv_r\|=1$), and the last condition \eqref{eq_well_aligned_2d} says the first $r-1$ eigenvectors remain approximately orthogonal to the last eigenvector in the subspace.

\begin{lemma} \label{lem_subspace_sigma}
    Suppose Assumption~\ref{ass_smoothness_2} holds.
    If $P_k$ is well-aligned and $\sigma_k \geq \epsilon > 0$, and
    \begin{align}
        \theta := (1-\alpha)^2 - \frac{4M(r-1)\alpha^2}{\epsilon (1-\alpha)^2} > 0. \label{eq_theta_defn}
    \end{align}
    Then
    \begin{align}
        \hat{\sigma}_k \geq \min((1-\alpha)^2, \theta) \epsilon.
    \end{align}
\end{lemma}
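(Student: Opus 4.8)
The plan is to split into two cases according to which term attains the maximum in $\sigma_k = \max(\|\grad f(\bx_k)\|, \tau_k) \geq \epsilon$, and to show the claimed lower bound holds in each.

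In the first case, $\|\grad f(\bx_k)\| \geq \epsilon$. Here the gradient-alignment property \eqref{eq_well_aligned_2b} gives immediately $\|P_k^T \grad f(\bx_k)\| \geq (1-\alpha)\|\grad f(\bx_k)\| \geq (1-\alpha)\epsilon \geq (1-\alpha)^2 \epsilon$ (using $0 < 1-\alpha < 1$), and since $\hat{\sigma}_k \geq \|P_k^T \grad f(\bx_k)\|$ by definition, we obtain $\hat{\sigma}_k \geq (1-\alpha)^2 \epsilon \geq \min((1-\alpha)^2,\theta)\epsilon$ in this case with no further work.

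In the second case, $\tau_k \geq \epsilon$, so $\lambda_{\min}(\grad^2 f(\bx_k)) \leq -\epsilon < 0$; since any zero eigenvalues (present when $r<n$) cannot be the minimum of a symmetric matrix that has a strictly negative eigenvalue, this forces $\lambda_r = \lambda_{\min}(\grad^2 f(\bx_k)) \leq -\epsilon$. The idea is to certify that the subspace Hessian $P_k^T \grad^2 f(\bx_k) P_k = \sum_{i=1}^r \lambda_i \hat{\bv}_i \hat{\bv}_i^T$ still has a sufficiently negative eigenvalue by evaluating the Rayleigh quotient at $\hat{\bv}_r$, using the variational bound $\lambda_{\min}(P_k^T \grad^2 f(\bx_k) P_k) \leq \hat{\bv}_r^T (P_k^T \grad^2 f(\bx_k) P_k) \hat{\bv}_r / \|\hat{\bv}_r\|^2$, which is valid because $\|\hat{\bv}_r\| \geq 1-\alpha > 0$ by \eqref{eq_well_aligned_2c}. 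Expanding the numerator as $\lambda_r \|\hat{\bv}_r\|^4 + \sum_{i=1}^{r-1} \lambda_i (\hat{\bv}_i^T \hat{\bv}_r)^2$ and dividing by $\|\hat{\bv}_r\|^2$, the leading term is bounded above by $-\epsilon (1-\alpha)^2$ (from $\lambda_r \leq -\epsilon < 0$ and $\|\hat{\bv}_r\|^2 \geq (1-\alpha)^2$, being careful with the sign flip when multiplying by the negative quantity $\lambda_r$), while the cross terms are bounded in absolute value by $4M(r-1)\alpha^2/(1-\alpha)^2$, using $|\lambda_i| \leq \|\grad^2 f(\bx_k)\| \leq M$ from Assumption~\ref{ass_smoothness_2}, the near-orthogonality bound \eqref{eq_well_aligned_2d}, and again $\|\hat{\bv}_r\|^2 \geq (1-\alpha)^2$. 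Combining these gives $\lambda_{\min}(P_k^T \grad^2 f(\bx_k) P_k) \leq -\theta\epsilon < 0$ with $\theta$ as in \eqref{eq_theta_defn}, so $\hat{\tau}_k = -\lambda_{\min}(P_k^T \grad^2 f(\bx_k) P_k) \geq \theta\epsilon$, hence $\hat{\sigma}_k \geq \hat{\tau}_k \geq \theta\epsilon \geq \min((1-\alpha)^2,\theta)\epsilon$. Taking the worse of the two cases yields the stated bound.

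The only place needing a moment's care is the claim $\lambda_r = \lambda_{\min}(\grad^2 f(\bx_k))$ in the second case (i.e.\ that for a rank-$r$ symmetric matrix possessing a strictly negative eigenvalue, the $r$-th largest nonzero eigenvalue coincides with the overall minimum eigenvalue), together with tracking the factors of $\|\hat{\bv}_r\|$ that appear precisely because the test vector $\hat{\bv}_r$ is not normalized; everything else is routine sign bookkeeping and two one-line magnitude bounds. I would also note that the boundary case $r=1$ is harmless: the sum over $i=1,\dots,r-1$ is empty, $\theta=(1-\alpha)^2$ automatically, and the argument above still applies verbatim.
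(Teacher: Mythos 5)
Your proposal is correct and follows essentially the same route as the paper's proof: split on which term attains the maximum in $\sigma_k$, use \eqref{eq_well_aligned_2b} directly in the gradient case, and in the curvature case test the Rayleigh quotient of $P_k^T\grad^2 f(\bx_k)P_k$ at $\hat{\bv}_r$, bounding the cross terms via \eqref{eq_well_aligned_2c}--\eqref{eq_well_aligned_2d} and $\|\grad^2 f(\bx_k)\|\leq M$. The only cosmetic difference is that you bound $|\lambda_i|\leq M$ uniformly where the paper splits on the sign of $\lambda_1$; both yield $\hat{\sigma}_k\geq\theta\epsilon$ in that case, which suffices.
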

\begin{proof}
    This argument is based on \cite[Lemma 6.10]{Cartis2024}, originally from \cite[Lemma 5.6.6]{Shao2021}.
    First, suppose that $\sigma_k=\|\grad f(\bx_k)\|$.
    Then
    \begin{align}
        \hat{\sigma}_k \geq \|P_k^T \grad f(\bx_k)\| \geq (1-\alpha) \|\grad f(\bx_k)\| = (1-\alpha) \sigma_k \geq (1-\alpha)^2 \epsilon,
    \end{align}
    using $(1-\alpha)^2 < 1-\alpha$ from $\alpha\in(0,1)$ and we are done.
    Instead, suppose that $\sigma_k = \tau_k$.
    Using Rayleigh quotients, we have
    \begin{align}
        \lambda_{\min}(P_k^T \grad^2 f(\bx_k) P_k) \leq \frac{\hat{\bv}_r^T (\sum_{i=1}^{r} \lambda_i \hat{\bv}_i \hat{\bv}_i^T) \hat{\bv}_r}{\|\hat{\bv}_r\|^2} = \lambda_r \|\hat{\bv}_r\|^2 + \frac{\sum_{i=1}^{r-1} \lambda_i (\hat{\bv}_i^T \hat{\bv}_r)^2}{\|\hat{\bv}_r\|^2}.
    \end{align}
    Since $\tau_k \geq \epsilon > 0$, we have $\tau_k = -\lambda_r$ and $\lambda_r< -\epsilon$, and so
    \begin{align}
        -\lambda_{\min}(P_k^T \grad^2 f(\bx_k) P_k) &\geq \epsilon \|\hat{\bv}_r\|^2 - \lambda_1 \frac{\sum_{i=1}^{r-1} (\hat{\bv}_i^T \hat{\bv}_r)^2}{\|\hat{\bv}_r\|^2}.
    \end{align}
    If $\lambda_1 \leq 0$ then the second term is non-negative, and so 
    \begin{align}
        -\lambda_{\min}(P_k^T \grad^2 f(\bx_k) P_k) \geq \epsilon (1-\alpha)^2 > 0,
    \end{align}
    and so $\hat{\tau}_k > 0$ with $\hat{\sigma}_k \geq \hat{\tau}_k \geq \epsilon (1-\alpha)^2$.
    Instead, if $\lambda_1 > 0$ then $\lambda_1 \leq \|\grad^2 f(\bx_k)\| \leq M$ and so we have
    \begin{align}
        -\lambda_{\min}(P_k^T \grad^2 f(\bx_k) P_k) &\geq \epsilon (1-\alpha)^2 - M \frac{4(r-1)\alpha^2}{(1-\alpha)^2} = \theta\epsilon > 0,
    \end{align}
    and so again $-\lambda_{\min}(P_k^T \grad^2 f(\bx_k) P_k) < 0$ yielding $\hat{\sigma}_k \geq \hat{\tau}_k > \theta\epsilon$ and the result follows.
\end{proof}

\begin{lemma} \label{lem_sigma_min}
    Suppose Assumption~\ref{ass_smoothness_2} holds and $\theta>0$ \eqref{eq_theta_defn}.
    If $P_k$ is well-aligned and $\sigma_k \geq \epsilon$, then if iteration $k$ is successful,
    \begin{align}
        \hat{\sigma}^m_k \geq c_1 \epsilon, \qquad \text{where} \qquad c_1 := \frac{\min(1-\alpha, (1-\alpha)^2, \theta)}{1+\kappa_{\sigma}\mu^{-1}}.
    \end{align}
\end{lemma}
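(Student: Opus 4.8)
The plan is to obtain the bound by chaining three facts already available: the lower bound on the \emph{true} subspace criticality measure $\hat{\sigma}_k$ from Lemma~\ref{lem_subspace_sigma}, the comparison between $\hat{\sigma}_k$ and the model measure $\hat{\sigma}^m_k$ from Lemma~\ref{lem_fully_quadratic}, and the fact that a successful iteration forces $\Delta_k$ to be small relative to $\hat{\sigma}^m_k$.

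First I would use that iteration $k$ is successful: by the acceptance test in Algorithm~\ref{alg_rsdfo2} this requires $\hat{\sigma}^m_k \geq \mu\Delta_k$, equivalently $\Delta_k \leq \mu^{-1}\hat{\sigma}^m_k$. Next, since $\hat{m}_k$ is $P_k$-fully quadratic on $B(\bx_k,\Delta_k)$ and Assumption~\ref{ass_smoothness_2} holds, Lemma~\ref{lem_fully_quadratic} gives $|\hat{\sigma}_k - \hat{\sigma}^m_k| \leq \kappa_\sigma \Delta_k$, hence in particular $\hat{\sigma}_k \leq \hat{\sigma}^m_k + \kappa_\sigma \Delta_k$. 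Substituting the bound $\Delta_k \leq \mu^{-1}\hat{\sigma}^m_k$ yields $\hat{\sigma}_k \leq (1+\kappa_\sigma \mu^{-1})\,\hat{\sigma}^m_k$.

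It then remains to lower bound $\hat{\sigma}_k$. The hypotheses of Lemma~\ref{lem_subspace_sigma} ($P_k$ well-aligned, $\sigma_k \geq \epsilon$, $\theta>0$) are precisely those assumed here, so $\hat{\sigma}_k \geq \min((1-\alpha)^2,\theta)\,\epsilon$; since $\alpha\in(0,1)$ gives $(1-\alpha)^2 < 1-\alpha$, this equals $\min(1-\alpha,(1-\alpha)^2,\theta)\,\epsilon$. Combining with the upper bound on $\hat{\sigma}_k$ from the previous step and dividing through by $1+\kappa_\sigma \mu^{-1}>0$ gives $\hat{\sigma}^m_k \geq c_1\epsilon$ with $c_1 = \min(1-\alpha,(1-\alpha)^2,\theta)/(1+\kappa_\sigma \mu^{-1})$, as claimed.

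I do not expect a genuine obstacle: once Lemmas~\ref{lem_fully_quadratic} and \ref{lem_subspace_sigma} are invoked, the argument is a two-line rearrangement. The only point needing care is using Lemma~\ref{lem_fully_quadratic} in the direction that bounds $\hat{\sigma}_k$ \emph{above} by $\hat{\sigma}^m_k + \kappa_\sigma\Delta_k$ (rather than the reverse), so that the successful-iteration inequality $\Delta_k \leq \mu^{-1}\hat{\sigma}^m_k$ can be inserted and the resulting inequality solved for $\hat{\sigma}^m_k$.
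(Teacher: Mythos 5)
Your proposal is correct and follows exactly the paper's argument: it chains the lower bound on $\hat{\sigma}_k$ from Lemma~\ref{lem_subspace_sigma} with the bound $\hat{\sigma}_k \leq \hat{\sigma}^m_k + \kappa_\sigma\Delta_k$ from Lemma~\ref{lem_fully_quadratic} and the successful-iteration inequality $\Delta_k \leq \mu^{-1}\hat{\sigma}^m_k$, then rearranges. No gaps.
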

\begin{proof}
    On successful iterations, we have $\hat{\sigma}^m_k \geq \mu \Delta_k$ and so from Lemmas~\ref{lem_fully_quadratic} and \ref{lem_subspace_sigma} we have
    \begin{align}
        \min(1-\alpha, (1-\alpha)^2, \theta)\epsilon \leq \hat{\sigma}_k \leq \hat{\sigma}^m_k + |\hat{\sigma}_k - \hat{\sigma}^m_k| \leq \hat{\sigma}^m_k + \kappa_{\sigma} \Delta_k \leq (1 + \kappa_{\sigma}\mu^{-1}) \hat{\sigma}^m_k,
    \end{align}
    and we are done.
\end{proof}

We are now ready to prove the worst-case complexity of Algorithm~\ref{alg_rsdfo2} to (approximate) second-order stationary points.
The below closely follows the argument from \cite{Cartis2023}, used to prove Theorem~\ref{thm_first_order_complexity}.
For a fixed $K$, define the following sets of iterations:
\begin{itemize}
    \item $\mathcal{A}$ is the set of iterations in $\{0,\ldots,K\}$ for which $P_k$ is well-aligned.
    \item $\mathcal{A}^C$ is the set of iterations in $\{0,\ldots,K\}$ for which $P_k$ is not well-aligned.
    \item $\mathcal{S}$ (resp.~$\mathcal{U}$) is the set of iterations in $\{0,\ldots,K\}$ which are successful (resp.~unsuccessful).
    \item $\mathcal{D}(\Delta)$ is the set of iterations in $\{0,\ldots,K\}$ for which $\Delta_k \geq \Delta$.
    \item $\mathcal{D}^C(\Delta)$ is the set of iterations in $\{0,\ldots,K\}$ for which $\Delta_k < \Delta$.
\end{itemize}
Hence, for any $K$ and $\Delta$, we have the partitions $\{0,\ldots,K\} = \mathcal{A}\cup \mathcal{A}^C = \mathcal{S} \cup \mathcal{U} = \mathcal{D}(\Delta) \cup \mathcal{D}^C(\Delta)$, as well as $\mathcal{D}(\Delta_1) \subseteq \mathcal{D}(\Delta_2)$ if $\Delta_1 \geq \Delta_2$.

We first use standard trust-region arguments to bound $\mathcal{A}$ (by considering the four cases $\mathcal{A} \cap \mathcal{D}(\Delta) \cap \mathcal{S}, \ldots, \mathcal{A} \cap \mathcal{D}^C(\Delta) \cap \mathcal{U}$ separately), provided $\sigma_k$ remains large.
We can then bound (to high probability) the total number of iterations $K$ for which $\sigma_k$ is large by noting that $\mathcal{A}^C$ occurs with small probability (Assumption~\ref{ass_prob_well_aligned}).

\begin{lemma} \label{lem_success_bound}
    Suppose Assumptions~\ref{ass_bounded_hess}, \ref{ass_smoothness_2} and \ref{ass_cauchy_decrease_2} hold, and $\theta>0$ \eqref{eq_theta_defn}.
    If $\sigma_k \geq \epsilon > 0$ for all $k\leq K$, then
    \begin{align}
        \#(\mathcal{A} \cap \mathcal{D}(\Delta) \cap \mathcal{S}) \leq \phi(\Delta,\epsilon), \qquad \text{where} \quad \phi(\Delta, \epsilon) := \frac{f(\bx_0) - \flow}{\eta \kappa_s c_1 \min(\epsilon \Delta, \epsilon \Delta^2)},
    \end{align}
    where $c_1>0$ is defined in Lemma~\ref{lem_sigma_min}.
\end{lemma}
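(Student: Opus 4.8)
The plan is a standard "sum of decreases telescopes" argument restricted to the subset of iterations $\mathcal{A}\cap\mathcal{D}(\Delta)\cap\mathcal{S}$. Consider any iteration $k$ in this set. Since $k$ is successful we have $R_k\geq\eta$, so by the definition of $R_k$ in \eqref{eq_ratio_test},
\begin{align}
f(\bx_k) - f(\bx_{k+1}) = f(\bx_k) - f(\bx_k+P_k\hat{\bs}_k) \geq \eta\bigl(\hat{m}_k(\bm{0}) - \hat{m}_k(\hat{\bs}_k)\bigr).
\end{align}
Now I bound $\hat{m}_k(\bm{0}) - \hat{m}_k(\hat{\bs}_k)$ from below using Assumption~\ref{ass_cauchy_decrease_2}. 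Because $k$ is successful, $\hat{\sigma}^m_k\geq\mu\Delta_k$; because $k\in\mathcal{A}$ and $\sigma_k\geq\epsilon$ (using the hypothesis $\sigma_j\geq\epsilon$ for all $j\leq K$) and $\theta>0$, Lemma~\ref{lem_sigma_min} gives $\hat{\sigma}^m_k\geq c_1\epsilon$. The Cauchy condition says
\begin{align}
\hat{m}_k(\bm{0}) - \hat{m}_k(\hat{\bs}_k) \geq \kappa_s \max\left(\|\hat{\bg}_k\| \min\left(\Delta_k, \frac{\|\hat{\bg}_k\|}{\max(\|\hat{H}_k\|,1)}\right), \hat{\tau}^m_k\Delta_k^2\right),
\end{align}
and I claim the right-hand side is at least $\kappa_s c_1\epsilon\min(\Delta_k,\Delta_k^2)$. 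Indeed, split on whether $\hat{\sigma}^m_k=\|\hat{\bg}_k\|$ or $\hat{\sigma}^m_k=\hat{\tau}^m_k$. In the gradient case, either $\Delta_k \leq \|\hat{\bg}_k\|/\max(\|\hat{H}_k\|,1)$, giving a decrease $\geq \kappa_s\|\hat{\bg}_k\|\Delta_k \geq \kappa_s c_1\epsilon\Delta_k$, or the min equals $\|\hat{\bg}_k\|/\max(\|\hat{H}_k\|,1)$; in the latter case one uses $\max(\|\hat{H}_k\|,1)\leq\kappa_H$ (Assumption~\ref{ass_bounded_hess}) together with $\Delta_k\leq\Delta_{\max}$ — this is the one slightly delicate point, since to get a clean $\min(\Delta_k,\Delta_k^2)$ bound one wants the exponent of $\Delta_k$ to be at most $1$, which holds after bounding $\|\hat{\bg}_k\|\geq\mu\Delta_k$ as well. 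In the curvature case the decrease is $\geq\kappa_s\hat{\tau}^m_k\Delta_k^2 = \kappa_s\hat{\sigma}^m_k\Delta_k^2 \geq \kappa_s c_1\epsilon\Delta_k^2$. So in all cases $f(\bx_k)-f(\bx_{k+1})\geq \eta\kappa_s c_1\epsilon\min(\Delta_k,\Delta_k^2)$.

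Next, restrict to $k\in\mathcal{D}(\Delta)$, i.e.\ $\Delta_k\geq\Delta$. Monotonicity of $t\mapsto\min(t,t^2)$ on $t>0$ gives $\min(\Delta_k,\Delta_k^2)\geq\min(\Delta,\Delta^2)$, hence each such iteration contributes a decrease of at least $\eta\kappa_s c_1\min(\epsilon\Delta,\epsilon\Delta^2)$. Since the objective is monotonically non-increasing along the iterates ($f(\bx_{k+1})\leq f(\bx_k)$ always, being equality on unsuccessful steps and a strict decrease on successful ones) and bounded below by $\flow$ (Assumption~\ref{ass_smoothness_2}), summing over all $k\in\mathcal{A}\cap\mathcal{D}(\Delta)\cap\mathcal{S}$ and telescoping yields
\begin{align}
f(\bx_0) - \flow \geq \sum_{k\in\mathcal{A}\cap\mathcal{D}(\Delta)\cap\mathcal{S}} \bigl(f(\bx_k)-f(\bx_{k+1})\bigr) \geq \#(\mathcal{A}\cap\mathcal{D}(\Delta)\cap\mathcal{S})\cdot\eta\kappa_s c_1\min(\epsilon\Delta,\epsilon\Delta^2).
\end{align}
Rearranging gives exactly $\#(\mathcal{A}\cap\mathcal{D}(\Delta)\cap\mathcal{S})\leq\phi(\Delta,\epsilon)$.

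The main obstacle — really the only non-bookkeeping step — is verifying the lower bound $\hat{m}_k(\bm{0})-\hat{m}_k(\hat{\bs}_k)\geq\kappa_s c_1\epsilon\min(\Delta_k,\Delta_k^2)$ cleanly from the two-term Cauchy condition, handling the case where the inner $\min$ in the gradient term selects $\|\hat{\bg}_k\|/\max(\|\hat{H}_k\|,1)$; this needs the bounded-Hessian assumption and the successful-iteration inequality $\hat{\sigma}^m_k\geq\mu\Delta_k$ to convert back to a multiple of $\epsilon$ times a power of $\Delta_k$ no larger than the first. Everything after that is the classical telescoping argument.
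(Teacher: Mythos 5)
Your proof is correct and takes essentially the same route as the paper: bound the per-iteration decrease on $\mathcal{A}\cap\mathcal{D}(\Delta)\cap\mathcal{S}$ below by $\eta\kappa_s c_1\min(\epsilon\Delta,\epsilon\Delta^2)$ using Lemma~\ref{lem_sigma_min} together with the two-case Cauchy condition, then telescope against $f(\bx_0)-\flow$. The ``delicate point'' you flag in the gradient case is real but is glossed over by the paper as well (it simply cites \eqref{eq_cauchy_case1}, whose derivation assumed $\Delta_k\leq\|\hat{\bg}_k\|/\kappa_H$); your fix via $\|\hat{\bg}_k\|\geq\mu\Delta_k$ on successful iterations is sound and at worst perturbs the constant in $\phi$ by a factor depending on $\mu/\kappa_H$.
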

\begin{proof}
    From \eqref{eq_cauchy_case1} and \eqref{eq_cauchy_case2}, we have 
    \begin{align}
        \hat{m}_k(\bm{0}) - \hat{m}_k(\hat{\bs}_k) \geq \min(\kappa_s \hat{\sigma}^m_k \Delta_k, \kappa_s \hat{\sigma}^m_k \Delta_k^2),
    \end{align}
    depending on whether $\hat{\sigma}^m_k = \|\hat{\bg}_k\|$ or $\hat{\sigma}^m_k = \hat{\tau}^m_k$.
    For $k\in \mathcal{A} \cap \mathcal{D}(\Delta)$, from Lemma~\ref{lem_sigma_min} we have $\Delta_k \geq \Delta$ and $\hat{\sigma}^m_k \geq c_1 \epsilon$ and so
    \begin{align}
        \hat{m}_k(\bm{0}) - \hat{m}_k(\hat{\bs}_k) \geq \kappa_s c_1 \min(\epsilon \Delta, \epsilon \Delta^2).
    \end{align}
    We note that $\bx_k$ is only changed for $k\in\mathcal{S}$, and so
    \begin{align}
        f(\bx_0) - \flow \geq \sum_{k\in\mathcal{S}} f(\bx_k) - f(\bx_k+P_k \hat{\bs}_k) \geq \sum_{k\in \mathcal{A} \cap \mathcal{D}(\Delta) \cap \mathcal{S}} \eta (\hat{m}_k(\bm{0}) - \hat{m}_k(\hat{\bs}_k)),
    \end{align}
    which yields
    \begin{align}
        f(\bx_0) - \flow \geq \eta \kappa_s c_1 \min(\epsilon \Delta, \epsilon \Delta^2) \cdot \#(\mathcal{A} \cap \mathcal{D}(\Delta) \cap \mathcal{S}),
    \end{align}
    which gives the result.
\end{proof}

\begin{lemma} \label{lem_unsuccess_bound}
    Suppose Assumptions~\ref{ass_bounded_hess}, \ref{ass_smoothness_2} and \ref{ass_cauchy_decrease_2}  hold, and $\theta>0$ \eqref{eq_theta_defn}.
    If $\sigma_k \geq \epsilon > 0$ for all $k\leq K$, then
    \begin{align}
        \#(\mathcal{A} \cap \mathcal{D}^C(\Delta) \cap \mathcal{U}) = 0,
    \end{align}
    for all $\Delta \leq c_2 \epsilon$, where
    \begin{align}
        c_2 := \frac{1}{\kappa_{\sigma} + c_0^{-1}} \min(1-\alpha, (1-\alpha)^2, \theta),
    \end{align}
    where $\kappa_{\sigma}>0$ is defined in Lemma~\ref{lem_fully_quadratic} and $c_0>0$ is defined in Lemma~\ref{lem_small_delta_successful}.
\end{lemma}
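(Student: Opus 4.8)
The plan is to argue by contradiction, using the contrapositive of Lemma~\ref{lem_small_delta_successful}: a genuinely unsuccessful iteration cannot have $\Delta_k$ too small relative to $\hat{\sigma}^m_k$. So suppose, for contradiction, that $k\in\mathcal{A}\cap\mathcal{D}^C(\Delta)\cap\mathcal{U}$ for some $\Delta\leq c_2\epsilon$; in particular $P_k$ is well-aligned, $\Delta_k<\Delta\leq c_2\epsilon$, and iteration $k$ is unsuccessful. I will show this forces $\Delta_k\leq c_0\hat{\sigma}^m_k$, whence Lemma~\ref{lem_small_delta_successful} declares iteration $k$ successful, a contradiction.

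To get the required lower bound on $\hat{\sigma}^m_k$, I would chain two earlier results. First, since $P_k$ is well-aligned, $\sigma_k\geq\epsilon$ and $\theta>0$, Lemma~\ref{lem_subspace_sigma} gives $\hat{\sigma}_k\geq\min((1-\alpha)^2,\theta)\,\epsilon=\min(1-\alpha,(1-\alpha)^2,\theta)\,\epsilon$, using $(1-\alpha)^2<1-\alpha$ for $\alpha\in(0,1)$. Second, since $\hat{m}_k$ is $P_k$-fully quadratic on $B(\bx_k,\Delta_k)$, Lemma~\ref{lem_fully_quadratic} yields $|\hat{\sigma}_k-\hat{\sigma}^m_k|\leq\kappa_{\sigma}\Delta_k$, hence $\hat{\sigma}^m_k\geq\hat{\sigma}_k-\kappa_{\sigma}\Delta_k\geq\min(1-\alpha,(1-\alpha)^2,\theta)\,\epsilon-\kappa_{\sigma}\Delta_k$.

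It then remains a short rearrangement: the desired inequality $\Delta_k\leq c_0\hat{\sigma}^m_k$ is implied by $\Delta_k\leq c_0\big(\min(1-\alpha,(1-\alpha)^2,\theta)\,\epsilon-\kappa_{\sigma}\Delta_k\big)$, i.e.\ by $\Delta_k\,(c_0^{-1}+\kappa_{\sigma})\leq\min(1-\alpha,(1-\alpha)^2,\theta)\,\epsilon$, i.e.\ by $\Delta_k\leq c_2\epsilon$, which holds since $\Delta_k<\Delta\leq c_2\epsilon$. Thus $\Delta_k\leq c_0\hat{\sigma}^m_k$, Lemma~\ref{lem_small_delta_successful} applies, and iteration $k$ is successful, contradicting $k\in\mathcal{U}$; therefore the set is empty. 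The argument is essentially bookkeeping that composes Lemmas~\ref{lem_small_delta_successful}, \ref{lem_fully_quadratic} and \ref{lem_subspace_sigma}, so there is no substantive obstacle; the only point needing care is tracking the constants so the threshold matches the stated $c_2$, in particular noting that multiplying numerator and denominator of $c_0\min(\cdot)/(1+c_0\kappa_{\sigma})$ by $c_0^{-1}$ produces $\min(\cdot)/(\kappa_{\sigma}+c_0^{-1})$.
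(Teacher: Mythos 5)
Your proposal is correct and follows essentially the same route as the paper: a contradiction argument composing Lemmas~\ref{lem_subspace_sigma}, \ref{lem_fully_quadratic} and \ref{lem_small_delta_successful}, with the same constant bookkeeping. The only (immaterial) difference is the direction of the contradiction—you show $\Delta_k < \Delta \leq c_2\epsilon$ forces $\Delta_k \leq c_0\hat{\sigma}^m_k$ and hence success, whereas the paper uses unsuccessfulness to get $\Delta_k > c_0\hat{\sigma}^m_k$ and contradicts $\Delta_k < \Delta$.
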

\begin{proof}


    This argument follows \cite[Lemma 4.2.3]{Garmanjani2015}.
    To find a contradiction, suppose $k\in \mathcal{A}\cap \mathcal{D}^C(\Delta) \cap \mathcal{U}$.
    Since $k\in\mathcal{A}$, we have $\hat{\sigma}_k \geq \min(1-\alpha, (1-\alpha)^2, \theta) \epsilon$ by Lemma~\ref{lem_subspace_sigma}.
    Also, since $k\in\mathcal{U}$, we must have $\Delta_k > c_0 \hat{\sigma}^m_k$ by Lemma~\ref{lem_small_delta_successful}.
    Thus,
    \begin{align}
        \min(1-\alpha, (1-\alpha)^2, \theta) \epsilon \leq \hat{\sigma}_k \leq (\hat{\sigma}_k - \hat{\sigma}^m_k) + \hat{\sigma}^m_k < \kappa_{\sigma} \Delta_k + c_0^{-1} \Delta_k,
    \end{align}
    where the last inequality follows from Lemma~\ref{lem_fully_quadratic}.
    By assumption on $\Delta$, this contradicts $k\in\mathcal{D}^C(\Delta)$, and we are done.
\end{proof}

\begin{lemma} \label{lem_trust_region_management_bounds}
    Suppose Assumptions~\ref{ass_bounded_hess},  \ref{ass_smoothness_2} and \ref{ass_cauchy_decrease_2}  hold, and $\theta>0$ \eqref{eq_theta_defn}.
    If $\sigma_k \geq \epsilon > 0$ for all $k\leq K$, then
    \begin{align}
        \#(\mathcal{D}(\gammadec^{-1} \Delta) \cap \mathcal{U}) &\leq c_3 \cdot \#(D(\gammainc^{-1} \Delta) \cap \mathcal{S}) + c_4(\Delta), \label{eq_delta_bound_1}
    \end{align}
    for all $\Delta\leq \Delta_0$, and 
    \begin{align}
        \#(\mathcal{D}^C(\gammainc^{-1} \Delta) \cap \mathcal{S}) &\leq c_3^{-1} \cdot \#(\mathcal{D}^C(\gammadec^{-1} \Delta) \cap \mathcal{U}), \label{eq_delta_bound_2}
    \end{align}
    for all $\Delta \leq \min(\Delta_0, \gammainc^{-1} \Delta_{\max})$, 
    where we define
    \begin{align}
        c_3 := \frac{\log(\gammainc)}{\log(\gammadec^{-1})}, \qquad \text{and} \qquad c_4(\Delta) := \frac{\log(\Delta_0/\Delta)}{\log(\gammadec^{-1})}.
    \end{align}
\end{lemma}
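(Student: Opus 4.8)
The plan is to use the standard trust-region counting argument (as in \cite[Lemma~4.2.3]{Garmanjani2015} and in the proof of Theorem~\ref{thm_first_order_complexity} from \cite{Cartis2023}): track a \emph{clipped} logarithm of the trust-region radius as a potential and telescope it over the iterations $0,1,\ldots,K$. The two structural facts are that on a successful iteration $\Delta_k\leq\Delta_{k+1}=\min(\gammainc\Delta_k,\Delta_{\max})\leq\gammainc\Delta_k$, while on an unsuccessful iteration $\Delta_{k+1}=\gammadec\Delta_k<\Delta_k$. Note also $\mathcal{D}(\gammadec^{-1}\Delta)\subseteq\mathcal{D}(\gammainc^{-1}\Delta)$ and $\mathcal{D}^C(\gammainc^{-1}\Delta)\subseteq\mathcal{D}^C(\gammadec^{-1}\Delta)$ since $\gammadec^{-1}>1>\gammainc^{-1}$, so the index sets on the two sides of each bound are threshold-nested, which is why the inequalities are meaningful.

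For \eqref{eq_delta_bound_1}, put $u_k:=\log\max(\Delta_k,\Delta)$, so that $u_k\geq\log\Delta$ for every $k$ and $u_0=\log\Delta_0$ (using $\Delta\leq\Delta_0$). I would verify the per-iteration increments: if $k\in\mathcal{S}$ then $0\leq u_{k+1}-u_k\leq\log\gammainc$, and if in addition $\Delta_k<\gammainc^{-1}\Delta$ then $\Delta_{k+1}<\Delta$ as well, so $u_{k+1}-u_k=0$; if $k\in\mathcal{U}$ then $u_{k+1}-u_k\leq0$, and if in addition $\Delta_k\geq\gammadec^{-1}\Delta$ then $\Delta_{k+1}=\gammadec\Delta_k\geq\Delta$, so $u_{k+1}-u_k=-\log(\gammadec^{-1})$ exactly. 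Hence in the telescoping sum $u_{K+1}-u_0=\sum_{k=0}^{K}(u_{k+1}-u_k)$, only iterations in $\mathcal{D}(\gammainc^{-1}\Delta)\cap\mathcal{S}$ can contribute positively (at most $\log\gammainc$ each) and each iteration in $\mathcal{D}(\gammadec^{-1}\Delta)\cap\mathcal{U}$ contributes $-\log(\gammadec^{-1})$, so
\[
\log\Delta-\log\Delta_0\;\leq\;u_{K+1}-u_0\;\leq\;\#(\mathcal{D}(\gammainc^{-1}\Delta)\cap\mathcal{S})\log\gammainc-\#(\mathcal{D}(\gammadec^{-1}\Delta)\cap\mathcal{U})\log(\gammadec^{-1}).
\]
Solving for $\#(\mathcal{D}(\gammadec^{-1}\Delta)\cap\mathcal{U})$ and dividing by $\log(\gammadec^{-1})$ gives exactly \eqref{eq_delta_bound_1}.

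For \eqref{eq_delta_bound_2} I would run the mirror-image argument with the upward-clipped potential $u'_k:=\log\min(\Delta_k,\Delta)$, so $u'_k\leq\log\Delta$ for every $k$ and $u'_0=\log\Delta$. The relevant increments are now: every $k\in\mathcal{S}$ has $u'_{k+1}-u'_k\geq0$, and if $\Delta_k<\gammainc^{-1}\Delta$ then $\gammainc\Delta_k<\Delta\leq\Delta_{\max}$ so the $\Delta_{\max}$ cap is inactive and $u'_{k+1}-u'_k=\log\gammainc$ exactly (this is where the hypothesis $\Delta\leq\gammainc^{-1}\Delta_{\max}$ is invoked, to safely rule out the cap); every $k\in\mathcal{U}$ has $u'_{k+1}-u'_k\leq0$, equal to $0$ when $\Delta_k\geq\gammadec^{-1}\Delta$ and at least $-\log(\gammadec^{-1})$ when $\Delta_k<\gammadec^{-1}\Delta$. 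Telescoping and using $u'_{K+1}\leq\log\Delta=u'_0$ then gives
\[
0\;\geq\;u'_{K+1}-u'_0\;\geq\;\#(\mathcal{D}^C(\gammainc^{-1}\Delta)\cap\mathcal{S})\log\gammainc-\#(\mathcal{D}^C(\gammadec^{-1}\Delta)\cap\mathcal{U})\log(\gammadec^{-1}),
\]
and rearranging yields \eqref{eq_delta_bound_2}.

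The content is entirely combinatorial bookkeeping once the potentials $u_k,u'_k$ are in place. I expect the only slightly delicate points to be the boundary iterations where $\Delta_k$ lies on the clipped side of the threshold $\Delta$ (so the clean $\pm\log\gamma$ identities must be relaxed to one-sided inequalities in the direction that preserves the telescoped bound), and checking that the $\Delta_{k+1}=\min(\gammainc\Delta_k,\Delta_{\max})$ cap cannot interfere in the low-radius regime relevant to \eqref{eq_delta_bound_2} — which is precisely the role of the extra hypothesis $\Delta\leq\gammainc^{-1}\Delta_{\max}$ there. No new idea beyond the template of \cite{Garmanjani2015,Cartis2023} is needed.
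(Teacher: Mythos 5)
Your proposal is correct and follows essentially the same route as the paper's proof: both track the movement of $\log(\Delta_k)$, balance the total increase from successful iterations against the total decrease from unsuccessful ones, and use the thresholds $\gammainc^{-1}\Delta$ and $\gammadec^{-1}\Delta$ to identify which iterations can move the radius across the level $\Delta$. Your clipped potentials $u_k$ and $u'_k$ are just a more explicit formalization of the paper's verbal telescoping argument, and your use of $\Delta \leq \gammainc^{-1}\Delta_{\max}$ to rule out the $\Delta_{\max}$ cap matches the paper's reasoning exactly.
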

\begin{proof}
    
    This follows the same argument as in \cite[Lemmas 6 \& 7]{Cartis2023}, which we include here for completeness.
    In both cases, we consider the movement of $\log(\Delta_k)$, which is increased by at most $\log(\gammainc)$ if $k\in\mathcal{S}$ and decreased by $\log(\gammadec^{-1})$ if $k\in\mathcal{U}$.

    To show \eqref{eq_delta_bound_1}, we note:
    \begin{itemize}
        \item Since $\Delta \leq \Delta_0$, the value $\log(\Delta)$ is $\log(\Delta_0/\Delta)$ below the starting value $\log(\Delta_0)$.
        \item If $k\in\mathcal{S}$, then $\log(\Delta_k)$ increases by at most $\log(\gammainc)$, and $\Delta_{k+1} \geq \Delta$ is only possible if $\Delta_k \geq \gammainc^{-1} \Delta$.
        \item If $k\in\mathcal{U}$, then $\log(\Delta_k)$ is decreased by $\log(\gammadec^{-1})$ and $\Delta_{k+1} \geq \Delta$ if $\Delta_k \geq \gammadec^{-1} \Delta$.
    \end{itemize}
    Hence the total decrease in $\log(\Delta_k)$ from $k\in\mathcal{D}(\gammadec^{-1})\cap \mathcal{U}$ must be fully matched by the initial gap $\log(\Delta_0/\Delta)$ plus the maximum possible amount that $\log(\Delta_k)$ can be increased above $\log(\Delta)$; that is,
    \begin{align}
        \log(\gammadec^{-1}) \#(\mathcal{D}(\gammadec^{-1}\Delta)\cap \mathcal{S}) \leq \log(\Delta_0/\Delta) + \log(\gammainc) \#(\mathcal{D}(\gammainc^{-1} \Delta) \cap \mathcal{S}),
    \end{align}
    and we get \eqref{eq_delta_bound_1}.

    To get \eqref{eq_delta_bound_2}, we use a similar approach.
    Specifically,
    \begin{itemize}
        \item If $k\in\mathcal{S}$, then since $\Delta \leq \gammainc^{-1} \Delta_{\max}$, we have $\log(\Delta_k)$ is increased by exactly $\log(\gammainc)$, and $\Delta_{k+1} < \Delta$ if and only if $\Delta_k < \gammainc^{-1} \Delta$.
        \item If $k\in\mathcal{U}$, then $\log(\Delta_k)$ is decreased by $\log(\gammadec^{-1})$ and we need $\Delta_k < \gammadec^{-1} \Delta$ to get $\Delta_{k+1} < \Delta$.
    \end{itemize}
    Since $\Delta \leq \Delta_0$, the total increase in $\log(\Delta_k)$ from $k\in \mathcal{D}^C(\gammainc^{-1} \Delta) \cap \mathcal{S}$ must be fully matched by the total decrease from $k\in\mathcal{D}^C (\gammadec^{-1} \Delta)\cap \mathcal{U}$, and so
    \begin{align}
        \log(\gammainc) \#(\mathcal{D}^C (\gammainc^{-1} \Delta) \cap \mathcal{S}) \leq \log(\gammadec^{-1}) \#(\mathcal{D}^C(\gammadec^{-1} \Delta) \cap \mathcal{U}),
    \end{align}
    and we get \eqref{eq_delta_bound_2}.
\end{proof}

We can now bound the total number of well-aligned iterations.

\begin{lemma} \label{lem_aligned_bound}
    Suppose Assumptions~\ref{ass_bounded_hess}, \ref{ass_smoothness_2} and \ref{ass_cauchy_decrease_2}  hold, $\theta>0$ \eqref{eq_theta_defn}, and $\gammainc > \gammadec^{-1}$.
    If $\sigma_k \geq \epsilon > 0$ for all $k\leq K$, then
    \begin{align}
        \#(\mathcal{A}) \leq \psi(\epsilon) + \frac{c_5}{c_5+1} \cdot (K+1),
    \end{align}
    where
    \begin{align}
        \psi(\epsilon) &:= \frac{1}{c_5+1}\left[(c_3 + 1) \phi(\Delta_{\min}, \epsilon) + c_4(\gammainc\Delta_{\min}) + \frac{\phi(\gammainc^{-1} \gammadec \Delta_{\min}, \epsilon)}{1-c_3^{-1}}\right], \label{phi-eps}\\
        \Delta_{\min} &:= \min\left(\gammainc^{-1} \Delta_0, \gammadec^{-1} \gammainc^{-1} \Delta_{\max}, \gammadec \gammainc^{-1} c_2 \epsilon\right), \\
        c_5 &:= \max\left(c_3, \frac{c_3^{-1}}{1-c_3^{-1}}\right),
    \end{align}
    with $c_5>0$, where $c_3$ and $c_4(\Delta)$ are defined in Lemma~\ref{lem_trust_region_management_bounds}.
\end{lemma}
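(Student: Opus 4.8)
The plan is to bound $\#(\mathcal{A})$ by splitting $\{0,\dots,K\}$ four ways — successful or unsuccessful, and trust-region radius $\Delta_k\ge\Delta_{\min}$ or $\Delta_k<\Delta_{\min}$ for the threshold $\Delta_{\min}$ given in the statement — controlling each of the four pieces with the preceding lemmas (so that everything not of the form $\phi(\cdot,\epsilon)$ or $c_4(\cdot)$ is absorbed into a constant multiple of $\#(\mathcal{A}^C)$), and then closing the argument through the identity $\#(\mathcal{A}^C)=(K+1)-\#(\mathcal{A})$. So I would start from
\begin{align*}
\#(\mathcal{A}) = {}&\#(\mathcal{A}\cap\mathcal{D}(\Delta_{\min})\cap\mathcal{S}) + \#(\mathcal{A}\cap\mathcal{D}(\Delta_{\min})\cap\mathcal{U})\\
&{}+ \#(\mathcal{A}\cap\mathcal{D}^C(\Delta_{\min})\cap\mathcal{S}) + \#(\mathcal{A}\cap\mathcal{D}^C(\Delta_{\min})\cap\mathcal{U}).
\end{align*}
The first term is at most $\phi(\Delta_{\min},\epsilon)$ directly from Lemma~\ref{lem_success_bound}. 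The fourth term is $0$ by Lemma~\ref{lem_unsuccess_bound}; more generally no well-aligned unsuccessful iteration can have radius below $c_2\epsilon$, which is exactly why the entry $\gammadec\gammainc^{-1}c_2\epsilon$ sits inside the minimum defining $\Delta_{\min}$, ensuring every radius at which Lemma~\ref{lem_unsuccess_bound} is invoked below stays $\le c_2\epsilon$.

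The two mixed terms are where Lemma~\ref{lem_trust_region_management_bounds} does the work. For $\#(\mathcal{A}\cap\mathcal{D}(\Delta_{\min})\cap\mathcal{U})\le\#(\mathcal{D}(\Delta_{\min})\cap\mathcal{U})$ I would invoke \eqref{eq_delta_bound_1} at the radius scaling that produces $c_4(\gammainc\Delta_{\min})$ (admissible since $\Delta_{\min}\le\gammainc^{-1}\Delta_0$), turning it into $c_3$ times a count of successful iterations with large radius, plus $c_4(\gammainc\Delta_{\min})$; I then split that successful count into its well-aligned portion, bounded by $\phi(\cdot,\epsilon)$ via Lemma~\ref{lem_success_bound}, and its non-well-aligned portion, which lies in $\mathcal{A}^C\cap\mathcal{D}(\cdot)$. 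For $\#(\mathcal{A}\cap\mathcal{D}^C(\Delta_{\min})\cap\mathcal{S})\le\#(\mathcal{D}^C(\Delta_{\min})\cap\mathcal{S})$ I would iterate \eqref{eq_delta_bound_2} (admissible since $\Delta_{\min}\le\gammadec^{-1}\gammainc^{-1}\Delta_{\max}$), which represents the count as a geometric series $c_3^{-1}+c_3^{-2}+\cdots=\tfrac{c_3^{-1}}{1-c_3^{-1}}$ of small-radius unsuccessful counts, each empty on its well-aligned portion (Lemma~\ref{lem_unsuccess_bound}) and otherwise lying in $\mathcal{A}^C\cap\mathcal{D}^C(\cdot)$, carrying along a $\phi(\gammainc^{-1}\gammadec\Delta_{\min},\epsilon)/(1-c_3^{-1})$ contribution from the same series. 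This series converges precisely because the hypothesis $\gammainc>\gammadec^{-1}$ makes $c_3>1$ (which also gives $c_5>0$); this is the only place that hypothesis enters.

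Adding the four pieces, the key observation is that the two leftover $\mathcal{A}^C$-sets — one consisting of iterations with $\Delta_k$ large, the other of iterations with $\Delta_k$ small — are disjoint, so their weighted combination is at most $\max\!\big(c_3,\tfrac{c_3^{-1}}{1-c_3^{-1}}\big)\,\#(\mathcal{A}^C)=c_5\,\#(\mathcal{A}^C)$, while the remaining $\phi$- and $c_4$-terms collect into $(c_5+1)\psi(\epsilon)$ with $\psi$ exactly as in \eqref{phi-eps} (after choosing the radii so that the only $\phi$-evaluations occur at $\Delta_{\min}$ and $\gammainc^{-1}\gammadec\Delta_{\min}$). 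Hence $\#(\mathcal{A})\le(c_5+1)\psi(\epsilon)+c_5\,\#(\mathcal{A}^C)$. Substituting $\#(\mathcal{A}^C)=(K+1)-\#(\mathcal{A})$ and rearranging gives $(1+c_5)\#(\mathcal{A})\le(c_5+1)\psi(\epsilon)+c_5(K+1)$, i.e.\ $\#(\mathcal{A})\le\psi(\epsilon)+\tfrac{c_5}{c_5+1}(K+1)$, as claimed.

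I expect the genuine work to be purely bookkeeping: choosing the powers of $\gammainc$ and $\gammadec$ in every application of Lemmas~\ref{lem_unsuccess_bound} and~\ref{lem_trust_region_management_bounds} so that all three radius hypotheses ($\Delta\le\Delta_0$, $\Delta\le\gammainc^{-1}\Delta_{\max}$, $\Delta\le c_2\epsilon$) hold simultaneously — which is what forces the three-way minimum in the definition of $\Delta_{\min}$ — together with assembling the geometric series in the small-radius regime so that the constants match \eqref{phi-eps} exactly.
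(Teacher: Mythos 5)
Your proposal is correct and follows essentially the same route as the paper: the same four-way decomposition by success/failure and $\Delta_k \gtrless \Delta_{\min}$, the same applications of Lemmas~\ref{lem_success_bound}, \ref{lem_unsuccess_bound} and \ref{lem_trust_region_management_bounds} at the same radius scalings, and the same closing step via $\#(\mathcal{A}^C)=(K+1)-\#(\mathcal{A})$. The only cosmetic difference is that the paper handles the small-radius successful count by a single application of \eqref{eq_delta_bound_2} followed by a self-referential rearrangement in $\#(\mathcal{A}\cap\mathcal{D}^C(\Delta_{\min}))$, whereas you describe the unrolled geometric-series version of that same recursion; both yield the identical factor $1/(1-c_3^{-1})$.
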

\begin{proof}
    This follows the proof of \cite[Lemma 9]{Cartis2023}.
    Since $\#(\mathcal{A}) = \#(\mathcal{A}\cap \mathcal{D}(\Delta_{\min})) + \#(\mathcal{A}\cap \mathcal{D}^C(\Delta_{\min}))$, we bound these terms individually.

    First, we use Lemma~\ref{lem_success_bound} to get
    \begin{align}
        \#(\mathcal{A}\cap \mathcal{D}(\Delta_{\min})) &= \#(\mathcal{A}\cap \mathcal{D}(\Delta_{\min})\cap\mathcal{S}) + \#(\mathcal{A}\cap \mathcal{D}(\Delta_{\min})\cap\mathcal{U}), \\
        &\leq \phi(\Delta_{\min}, \epsilon) + \#(\mathcal{A} \cap \mathcal{D}(\gammadec^{-1} \gammainc \Delta_{\min}) \cap \mathcal{U}) \nonumber \\
        &\qquad\qquad\quad\:\:\: + \#(\mathcal{A} \cap \mathcal{D}^C(\gammadec^{-1} \gammainc \Delta_{\min}) \cap \mathcal{D}(\Delta_{\min}) \cap \mathcal{U}), \\
        &\leq \phi(\Delta_{\min}, \epsilon) + c_3 \#(\mathcal{D}(\Delta_{\min}) \cap \mathcal{S}) + c_4(\gammainc\Delta_{\min}) + 0, \\
        &\leq (c_3 + 1) \phi(\Delta_{\min}, \epsilon) + c_3 \#(\mathcal{A}^C \cap \mathcal{D}(\Delta_{\min}) \cap \mathcal{S}) + c_4(\gammainc\Delta_{\min}),
    \end{align}
    where the second inequality follows from  Lemma~\ref{lem_trust_region_management_bounds} (specifically \eqref{eq_delta_bound_1} after noting $\gammainc \Delta_{\min} \leq \Delta_0$) and Lemma~\ref{lem_unsuccess_bound} (with $\gammadec^{-1} \gammainc \Delta_{\min} \leq c_2 \epsilon$), and the last inequality comes from using Lemma~\ref{lem_success_bound} a second time.

    Separately, we use Lemma~\ref{lem_unsuccess_bound} (with $\Delta_{\min} \leq c_2 \epsilon$) to get
    \begin{align}
        \#(\mathcal{A}\cap \mathcal{D}^C(\Delta_{\min})) &= \#(\mathcal{A} \cap \mathcal{D}^C(\Delta_{\min}) \cap \mathcal{S}), \\
        &= \#(\mathcal{A} \cap \mathcal{D}(\gammainc^{-1} \gammadec \Delta_{\min}) \cap \mathcal{D}^C (\Delta_{\min}) \cap \mathcal{S}) \nonumber \\
        &\qquad\qquad\qquad + \#(\mathcal{A} \cap \mathcal{D}^C(\gammainc^{-1} \gammadec \Delta_{\min}) \cap \mathcal{S}), \\
        &\leq \#(\mathcal{A} \cap \mathcal{D}(\gammainc^{-1} \gammadec \Delta_{\min}) \cap \mathcal{S}) + \#(\mathcal{D}^C(\gammainc^{-1} \gammadec \Delta_{\min}) \cap \mathcal{S}), \\
        &\leq \phi(\gammainc^{-1} \gammadec \Delta_{\min}, \epsilon) + c_3^{-1} \#(\mathcal{D}^C (\Delta_{\min}) \cap \mathcal{U}), \\
        &= \phi(\gammainc^{-1} \gammadec \Delta_{\min}, \epsilon) + c_3^{-1} \#(\mathcal{A} \cap \mathcal{D}^C (\Delta_{\min}) \cap \mathcal{U}) \nonumber \\
        &\qquad\qquad\qquad\qquad\qquad + c_3^{-1} \#(\mathcal{A}^C \cap \mathcal{D}^C (\Delta_{\min}) \cap \mathcal{U}), \label{eq_a_bound_1}
    \end{align}
    where the second inequality comes from Lemma~\ref{lem_success_bound} and Lemma~\ref{lem_trust_region_management_bounds} (specifically \eqref{eq_delta_bound_2} with $\gammadec \Delta_{\min} \leq \Delta_{\min} \leq \Delta_0$ and $\gammadec \Delta_{\min} \leq \gammainc^{-1} \Delta_{\max}$).
    We now use $\#(\mathcal{A} \cap \mathcal{D}^C (\Delta_{\min}) \cap \mathcal{U}) \leq \#(\mathcal{A} \cap \mathcal{D}^C (\Delta_{\min}))$ and rearrange the above to get
    \begin{align}
        \#(\mathcal{A}\cap \mathcal{D}^C(\Delta_{\min})) &\leq \frac{1}{1-c_3^{-1}} \left[\phi(\gammainc^{-1} \gammadec \Delta_{\min}, \epsilon) + c_3^{-1} \#(\mathcal{A}^C \cap \mathcal{D}^C (\Delta_{\min}) \cap \mathcal{U})\right], \label{eq_a_bound_2}
    \end{align}
    where the assumption $\gammainc > \gammadec^{-1} > 1$ guarantees $c_3^{-1} \in (0,1)$.

    We combine \eqref{eq_a_bound_1} and \eqref{eq_a_bound_2} to get
    \begin{align}
        \#(\mathcal{A}) \leq (c_3 + 1) \phi(\Delta_{\min}, \epsilon) + c_4(\gammainc\Delta_{\min}) + \frac{\phi(\gammainc^{-1} \gammadec \Delta_{\min}, \epsilon)}{1-c_3^{-1}}  \\
       + \max\left(c_3, \frac{c_3^{-1}}{1-c_3^{-1}}\right) \cdot \#(\mathcal{A}^C),
    \end{align}
    and the result then follows after using $\#(\mathcal{A}^C) = (K+1)-\#(\mathcal{A})$ and rearranging.
    To show that $c_5>0$, recall $c_3^{-1} \in (0,1)$.
\end{proof}

To get a total complexity bound, it suffices to note that $P_k$ is not well-aligned (i.e.~$\mathcal{A}^C$) for only a small fraction of all iterations, which comes from the following Chernoff-type bound.

\begin{lemma} \label{lem_bad_aligned_chernoff}
    Suppose Assumption~\ref{ass_prob_well_aligned} holds.
    Then
    \begin{align}
        \mathbb{P}\left[\#(\mathcal{A}) \leq (1-\delta)(1-\delta_S)(K+1)\right] \leq \exp\left(-\delta^2 (1-\delta_S) (K+1)/2\right),
    \end{align}
    for any $\delta\in(0,1)$.
\end{lemma}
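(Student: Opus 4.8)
The plan is to read this as a standard multiplicative (``relative'') Chernoff bound for a sum of $\{0,1\}$-valued random variables whose \emph{conditional} success probabilities are bounded below by $1-\delta_S$, and to prove it by the exponential/moment-generating-function method run through the natural filtration. For $k=0,\ldots,K$ let $W_k\in\{0,1\}$ be the indicator that $P_k$ is well-aligned in the sense of Definition~\ref{2:well-aligned}, so that $\#(\mathcal{A})=\sum_{k=0}^K W_k$, and let $\mathcal{F}_k:=\sigma(P_0,\ldots,P_k)$ (with $\mathcal{F}_{-1}$ trivial). The event that $P_k$ is well-aligned is $\mathcal{F}_k$-measurable, so Assumption~\ref{ass_prob_well_aligned} is precisely the statement that $p_k:=\mathbb{E}[W_k\mid\mathcal{F}_{k-1}]\geq 1-\delta_S$ almost surely, for every $k$.

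The first step is a one-step bound on the conditional moment generating function of $-W_k$. Fixing $t>0$ and using $W_k\in\{0,1\}$ and $1-e^{-t}\in(0,1)$, one has $\mathbb{E}[e^{-tW_k}\mid\mathcal{F}_{k-1}] = 1-p_k(1-e^{-t}) \leq 1-(1-\delta_S)(1-e^{-t}) \leq \exp\big(-(1-\delta_S)(1-e^{-t})\big)$, the last step by $1+x\leq e^x$. Since the partial products are nonnegative and $\mathcal{F}_{k-1}$-measurable, conditioning successively on $\mathcal{F}_{K-1},\mathcal{F}_{K-2},\ldots$ (tower property) gives
\begin{align}
    \mathbb{E}\left[\exp\left(-t\sum_{k=0}^{K}W_k\right)\right] \leq \exp\left(-(K+1)(1-\delta_S)(1-e^{-t})\right).
\end{align}

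Then I would set $\beta:=(1-\delta)(1-\delta_S)$ and apply Markov's inequality to the nonnegative variable $\exp(-t\sum_k W_k)$: for every $t>0$,
\begin{align}
    \mathbb{P}\left[\#(\mathcal{A})\leq\beta(K+1)\right] \leq \exp\left(t\beta(K+1)-(K+1)(1-\delta_S)(1-e^{-t})\right) = \exp\left((K+1)(1-\delta_S)\,h(t)\right),
\end{align}
where $h(t):=t(1-\delta)-1+e^{-t}$. Optimizing over $t>0$ gives $t=\log\frac{1}{1-\delta}$ (which is positive since $\delta\in(0,1)$) and $h(t)=-(1-\delta)\log(1-\delta)-\delta$. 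The remaining check is the elementary inequality $(1-\delta)\log(1-\delta)+\delta\geq \delta^2/2$ on $(0,1)$: both sides vanish at $\delta=0$, their difference has derivative $-\log(1-\delta)-\delta$ which also vanishes at $0$, and that derivative is nondecreasing because its own derivative is $\delta/(1-\delta)\geq 0$. Hence $h(t)\leq-\delta^2/2$, so the exponent is at most $-\delta^2(1-\delta_S)(K+1)/2$, which is the claim. The only genuinely delicate point is that the $W_k$ are \emph{not} independent --- ``well-aligned'' depends on $\bx_k$, $\grad f(\bx_k)$ and $\grad^2 f(\bx_k)$, hence on $P_0,\ldots,P_{k-1}$ --- which is exactly why the argument is run through the filtration $\mathcal{F}_k$ using only the conditional lower bound supplied by Assumption~\ref{ass_prob_well_aligned}; everything else is routine. (This estimate is of the same type as the Chernoff bounds for subspace methods in \cite{Cartis2023,Roberts2023}, which one could alternatively cite.)
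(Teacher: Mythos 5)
Your proof is correct: the one-step conditional MGF bound, the tower-property telescoping through the filtration, the Markov/optimization step with $t=\log\frac{1}{1-\delta}$, and the elementary inequality $(1-\delta)\log(1-\delta)+\delta\geq\delta^2/2$ are all valid, and you correctly identify that the only non-routine point is the lack of independence of the indicators, which is exactly what the conditional form of Assumption~\ref{ass_prob_well_aligned} and the filtration argument handle. The paper does not actually prove this lemma --- it simply cites \cite[Eq.~(66)]{Cartis2023} --- and your argument is precisely the standard lower-tail multiplicative Chernoff bound for a sum of $\{0,1\}$-valued variables with conditionally bounded success probabilities that underlies that citation, so in substance you have taken the same route, just written out in full.
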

\begin{proof}
    This is \cite[Eq.~(66)]{Cartis2023}. 
\end{proof}

Lastly, we follow the argument from \cite[Theorem 1]{Cartis2023} to get our main worst-case complexity bound.

\begin{theorem} \label{thm_wcc_2}
Suppose Assumptions~\ref{ass_bounded_hess}, \ref{ass_smoothness_2}, \ref{ass_cauchy_decrease_2} and \ref{ass_prob_well_aligned} hold, $\theta>0$ \eqref{eq_theta_defn}, $\gammainc > \gammadec^{-1}$, and $\delta_S < \frac{1}{1+c_5}$ for $c_5$ defined in Lemma~\ref{lem_aligned_bound}.
    Then for any $\epsilon>0$ and 
    \begin{align}
        k \geq \frac{2\psi(\epsilon)}{1-\delta_s - c_5/(c_5+1)} - 1, \label{eq_full_complexity_k},
    \end{align}
 where $\psi(\epsilon)$ is defined in \eqref{phi-eps},   we have
    \begin{align}
        \mathbb{P}\left[\min_{j\leq k} \sigma_j \leq \epsilon\right] \geq 1 - \exp\left(-(k+1) \cdot \frac{[1-\delta_S - c_5/(c_5+1)]^2}{8(1-\delta_S)}\right). \label{eq_full_complexity_k_bound}
    \end{align}
    Alternatively, if we define $K_{\epsilon} := \min\{k : \sigma_k \leq \epsilon\}$ we have
    \begin{align}
        \mathbb{P}\left[K_{\epsilon} \leq \left\lceil \frac{2\psi(\epsilon)}{1-\delta_s - c_5/(c_5+1)} - 1 \right\rceil\right] \geq 1 - \exp\left(-\psi(\epsilon) \cdot \frac{1-\delta_S - c_5/(c_5+1)}{4(1-\delta_S)}\right). \label{eq_full_complexity_eps_bound}
    \end{align}
\end{theorem}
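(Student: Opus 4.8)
The plan is to assemble the deterministic counting bound of Lemma~\ref{lem_aligned_bound} with the Chernoff-type bound of Lemma~\ref{lem_bad_aligned_chernoff}, following the argument of \cite[Theorem 1]{Cartis2023}. I would first introduce the shorthand $\beta := 1 - \delta_S - c_5/(c_5+1)$, which is strictly positive precisely because of the hypothesis $\delta_S < 1/(1+c_5)$, and fix the concentration parameter $\delta := \beta/(2(1-\delta_S))$. A one-line check shows $\delta \in (0,1)$: positivity is clear, and $\beta < 1-\delta_S$ (since $c_5>0$) forces $\delta < 1/2$. The point of this particular choice is that it makes two algebraic identities hold simultaneously, namely $(1-\delta)(1-\delta_S) = \beta/2 + c_5/(c_5+1)$ and $\delta^2(1-\delta_S)/2 = \beta^2/(8(1-\delta_S))$; the second is what produces the exponent appearing in \eqref{eq_full_complexity_k_bound}.

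Next, fix any $k$ satisfying \eqref{eq_full_complexity_k}, which (since $\beta>0$) is equivalent to $\psi(\epsilon) \le (\beta/2)(k+1)$, and condition on the complement event $\{\min_{j\le k}\sigma_j > \epsilon\}$. On this event every $\sigma_j$ with $j \le k$ exceeds $\epsilon$, in particular $\sigma_j \ge \epsilon$, so all hypotheses of Lemma~\ref{lem_aligned_bound} are met with $K = k$ (the remaining ones, $\theta>0$ and $\gammainc>\gammadec^{-1}$, are assumed, as are Assumptions~\ref{ass_bounded_hess}, \ref{ass_smoothness_2}, \ref{ass_cauchy_decrease_2}). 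That lemma gives $\#(\mathcal{A}) \le \psi(\epsilon) + \frac{c_5}{c_5+1}(k+1) \le \big(\frac{\beta}{2} + \frac{c_5}{c_5+1}\big)(k+1) = (1-\delta)(1-\delta_S)(k+1)$. Hence $\{\min_{j\le k}\sigma_j > \epsilon\} \subseteq \{\#(\mathcal{A}) \le (1-\delta)(1-\delta_S)(k+1)\}$, and Lemma~\ref{lem_bad_aligned_chernoff} bounds the probability of the right-hand event by $\exp(-\delta^2(1-\delta_S)(k+1)/2) = \exp(-(k+1)\beta^2/(8(1-\delta_S)))$. Taking complements yields \eqref{eq_full_complexity_k_bound}.

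For the first-hitting-time form \eqref{eq_full_complexity_eps_bound}, note that $\{K_\epsilon \le k\}$ is exactly the event $\{\min_{j\le k}\sigma_j \le \epsilon\}$, so the bound just established applies verbatim. Choosing the smallest admissible $k$, namely $k = \lceil 2\psi(\epsilon)/\beta - 1 \rceil$, gives $k+1 \ge 2\psi(\epsilon)/\beta$; substituting this lower bound into the exponent $\beta^2/(8(1-\delta_S))$ and simplifying produces $\psi(\epsilon)\,\beta/(4(1-\delta_S))$, which is the claimed rate.

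There is no genuine analytic obstacle here, since every inequality invoked has already been proved; the only thing to watch is the constant bookkeeping, i.e.\ verifying that the chosen $\delta$ lies in $(0,1)$ and makes the two identities above exact, so that the final exponents match \eqref{eq_full_complexity_k_bound}--\eqref{eq_full_complexity_eps_bound} rather than merely being of the same order. A secondary minor point is the strict-versus-nonstrict inequality when invoking Lemma~\ref{lem_aligned_bound}, which is harmless since $\sigma_j > \epsilon$ implies $\sigma_j \ge \epsilon$.
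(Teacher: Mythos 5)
Your proposal is correct and follows essentially the same route as the paper's proof: the same combination of Lemma~\ref{lem_aligned_bound} with the Chernoff bound of Lemma~\ref{lem_bad_aligned_chernoff}, and the identical choice $\delta = \beta/(2(1-\delta_S))$ (the paper writes it as $\frac{1}{2}[1 - \frac{c_5}{(c_5+1)(1-\delta_S)}]$, which is the same quantity). The only difference is cosmetic: you apply Lemma~\ref{lem_aligned_bound} directly on the event $\{\min_{j\le k}\sigma_j > \epsilon\}$ with the target $\epsilon$, whereas the paper introduces $\epsilon_k := \min_{j\le k}\sigma_j$ and invokes monotonicity of $\psi$ (plus a separate treatment of $\epsilon_k = 0$); your version sidesteps that bookkeeping without changing the substance.
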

\begin{proof}
    Fix some iteration $k$, and let $\epsilon_k := \min_{j\leq k} \sigma_j$ and $A_k$ be the number of well-aligned iterations in $\{0,\ldots,k\}$.
    If $\epsilon_k > 0$, from Lemma~\ref{lem_aligned_bound} we have
    \begin{align}
        A_k \leq \psi(\epsilon_k) + \frac{c_5}{c_5+1}(k+1).
    \end{align}
    If we choose $\delta\in(0,1)$ such that $(1-\delta)(1-\delta_S) > c_5/(c_5+1)$, then
    \begin{align}
        \mathbb{P}\left[\psi(\epsilon_k) \leq \left((1-\delta)(1-\delta_S) - \frac{c_5}{c_5+1}\right)(k+1)\right] &\leq \mathbb{P}\left[A_k \leq (1-\delta)(1-\delta_S)(k+1)\right], \\
        &\leq \exp\left(-\delta^2 (1-\delta_S) (k+1)/2\right), \label{eq_full_complexity_tmp1}
    \end{align}
    from Lemma~\ref{lem_bad_aligned_chernoff}.
    Now define
    \begin{align}
        \delta := \frac{1}{2}\left[1 - \frac{c_5}{(c_5+1)(1-\delta_S)}\right], \label{eq_full_complexity_delta_defn}
    \end{align}
    which gives
    \begin{align}
        (1-\delta)(1-\delta_S) = \frac{1}{2}\left[1-\delta_S + \frac{c_5}{c_5+1}\right] > \frac{c_5}{c_5+1},
    \end{align}
    using $1-\delta_S > c_5/(c_5+1)$ from our assumption on $\delta_S$.
    Then \eqref{eq_full_complexity_tmp1} gives
    \begin{align}
        \mathbb{P}\left[\psi(\epsilon_k) \leq \frac{1}{2}\left(1-\delta_S-\frac{c_5}{c_5+1}\right)(k+1)\right] &\leq \exp\left(-\delta^2 (1-\delta_S) (k+1)/2\right). \label{eq_full_complexity_tmp2}
    \end{align}
    If we have $\epsilon_k=0$, then since $\lim_{\epsilon\to 0^{+}} \psi(\epsilon)=\infty$, we again get \eqref{eq_full_complexity_tmp2} since the left-hand side of \eqref{eq_full_complexity_tmp2} is zero.

    Now, fix $\epsilon>0$ and choose $k$ satisfying \eqref{eq_full_complexity_k}.
    Since $\psi(\epsilon)$ is non-increasing in $\epsilon$, we have
    \begin{align}
        \mathbb{P}\left[\epsilon_k > \epsilon\right] &\leq \mathbb{P}\left[\psi(\epsilon_k) \leq \psi(\epsilon)\right], \\
        &\leq \mathbb{P}\left[\psi(\epsilon_k) \leq \frac{1}{2}\left(1-\delta_S-\frac{c_5}{c_5+1}\right)(k+1)\right], \\
        &\leq \exp\left(-\delta^2 (1-\delta_S) (k+1)/2\right)
    \end{align}
    where the second and third inequalities follow from \eqref{eq_full_complexity_k} and \eqref{eq_full_complexity_tmp2} respectively.
    We substitute the definition of $\delta$ \eqref{eq_full_complexity_delta_defn} to get our desired result \eqref{eq_full_complexity_k_bound}.

    Lastly, define
    \begin{align}
        k := \left\lceil \frac{2\psi(\epsilon)}{1-\delta_s - c_5/(c_5+1)} - 1 \right\rceil, \label{eq_full_complexity_tmp3}
    \end{align}
    satisfying \eqref{eq_full_complexity_k}.
    Then \eqref{eq_full_complexity_k_bound} gives
    \begin{align}
        \mathbb{P}[K_{\epsilon} > k] = \mathbb{P}[\epsilon_k > \epsilon] &\leq \exp\left(-(k+1) \cdot \frac{[1-\delta_S - c_5/(c_5+1)]^2}{8(1-\delta_S)}\right), \\
        &\leq \exp\left(-\psi(\epsilon) \cdot \frac{1-\delta_S - c_5/(c_5+1)}{4(1-\delta_S)}\right),
    \end{align}
    where we get the last inequality by substituting \eqref{eq_full_complexity_tmp3}, and we get \eqref{eq_full_complexity_eps_bound}.
\end{proof}

\paragraph*{Summary of complexity}
Our main result Theorem~\ref{thm_wcc_2} gives a worst-case complexity to achieve $\sigma_k \leq \epsilon$ of $O(\psi(\epsilon))$ iterations, with high probability (and where that probability converges to 1 as $\epsilon\to 0$).

For convenience, define $\kappa_d := \max(\kappaef, \kappaeg, \kappaeh, \kappa_H)$.
Then for $\epsilon$ sufficiently small\footnote{Specifically, we need $\epsilon \leq \min(\gammadec^{-1}\Delta_0, \gammadec^{-2}\Delta_{\max})/ c_2$ to get $\Delta_{\min}=\Theta(\epsilon)$ and $\epsilon \leq \gammainc / (\gammadec c_2)$ to ensure $\Delta_{\min}\leq 1$, which is required for $\phi(\Delta_{\min},\epsilon) = \Theta(\epsilon^{-1} \Delta_{\min}^{-2})$. We also need $\epsilon \leq 1$ in order to ignore the term $c_4(\gammainc \Delta_{\min}) \sim \log(\epsilon^{-1})$ in $\psi(\epsilon)$.} we have $\Delta_{\min} = \Theta(c_2 \epsilon) = \Theta(\kappa_d^{-1} \epsilon)$, and $\psi(\epsilon) = \Theta(c_1^{-1} \epsilon^{-1} \Delta_{\min}^{-2})$, with $c_1 = \Theta(\kappa_{\sigma}^{-1}) = \Theta(\kappa_d^{-1})$.

Hence, our overall (high probability) complexity bound is $O(\kappa_d^3 \epsilon^{-3})$ iterations.
If we assume $O(p^2)$ evaluations per iteration, which is required for fully quadratic models, we get an alternative (high probability) complexity of $O(p^2 \kappa_d^3 \epsilon^{-3})$ objective evaluations.
This matches the existing (full space) complexity bound for model-based DFO from \cite[Chapter 4.2]{Garmanjani2015}.

\begin{remark}
    Following the arguments from \cite[Section 2.5]{Cartis2023}, the above high-probability complexity bound can be used to derive alternative results, such as complexity bounds that hold in expectation, and almost-sure convergence of a subsequence of $\sigma_k$ to zero, for example.
\end{remark}

\subsection{Generating Well-Aligned Subspaces} \label{sec_well_aligned}
We now turn our attention to satisfying Assumption~\ref{ass_prob_well_aligned}, that is, the matrix $P_k$ is well-aligned with high probability.
We can achieve this by choosing $P_k$ to be a Johnson-Lindenstrauss Transform (JLT) (e.g.~\cite[Section 2.1]{Woodruff2014}).
This is a random matrix ensemble that approximately preserves norms of finite collections of vectors, the most common example being where the entries of $P_k$ are chosen to be i.i.d.~$N(0,p^{-1})$ distributed, although other constructions exist.
See \cite[Chapter 4]{Shao2021} and \cite[Section 2.6]{Cartis2023} for more details.

To satisfy Assumption~\ref{ass_prob_well_aligned}, we choose $P_k$ to be a JLT such that (with high probability) (based on ideas from \cite[Chapter 5]{Shao2021} and \cite{Cartis2024})
\begin{subequations}
\begin{align}
    (1-\alpha)\|\grad f(\bx_k)\| &\leq \|P_k^T \grad f(\bx_k)\| \leq (1+\alpha) \|\grad f(\bx_k)\|, \\
    (1-\alpha)\|\bv_r\| &\leq \|\hat{\bv}_r\| \leq (1+\alpha)\|\bv_r\|, \\
    (1-\alpha)\|\bv_i+\bv_r\| &\leq \|\hat{\bv}_i+\hat{\bv}_r\| \leq (1+\alpha)\|\bv_i+\bv_r\| , \qquad \forall i=1,\ldots,r-1, \\
    (1-\alpha)\|\bv_i-\bv_r\| &\leq \|\hat{\bv}_i-\hat{\bv}_r\| \leq (1+\alpha)\|\bv_i-\bv_r\| , \qquad \forall i=1,\ldots,r-1,
\end{align}
\end{subequations}
all hold, where $r := \operatorname{rank}(\grad^2 f(\bx_k)) \leq n$.
This requires (approximately) preserving the norm of $2r$ vectors, and so can be achieved with subspace dimension $p=O(\alpha^{-2}\log r)$.
The additional requirement $\|P_k\| \leq P_{\max}$ can also be achieved with high probability without any modification to $p$, with $P_{\max}=\Theta(\sqrt{n/p})$ typical, e.g.~\cite[Table 2]{Roberts2023}.

The above immediately gives the first three conditions for a well-aligned subspace, \eqref{eq_well_aligned_2a}, \eqref{eq_well_aligned_2b} and \eqref{eq_well_aligned_2c}.
For the last condition \eqref{eq_well_aligned_2d}, we use the polarization identity\footnote{That is, $\bx^T \by = \frac{1}{4}(\|\bx+\by\|^2 - \|\bx-\by\|^2)$ for any $\bx,\by\in\R^n$.} to compute
\begin{align}
    \hat{\bv}_i^T \hat{\bv}_r &= \frac{1}{4}\left(\|\hat{\bv}_i+\hat{\bv}_r\|^2 - \|\hat{\bv}_i-\hat{\bv}_r\|^2\right), \\
    &\leq \frac{1}{4}\left((1+\alpha)^2\|\bv_i+\bv_r\|^2 - (1-\alpha)^2\|\bv_i-\bv_r\|^2\right), \\
    &= (1+\alpha^2)\bv_i^T \bv_r + \frac{1}{4}\left(2\alpha\|\bv_i+\bv_r\|^2 + 2\alpha\|\bv_i-\bv_r\|^2\right), \\
    &= 2\alpha,
\end{align}
where the last equality uses the fact that $\bv_i$ and $\bv_r$ are orthonormal.
Similar reasoning yields $\hat{\bv}_i^T \hat{\bv}_r \geq -2\alpha$ and so \eqref{eq_well_aligned_2d} is achieved.

In terms of the subspace fully quadratic constants, we note that $\hat{f}_k(\bs) := f(\bx_k+P_k\bs)$ has $(P_{\max}^3 L_H)$-Lipschitz continuous Hessian under Assumption~\ref{ass_smoothness_2}.
Recalling the definition $\kappa_d:=\max(\kappaef,\kappaeg,\kappaeh)$, for models built using $\Lambda$-poised fully quadratic interpolation, we have $\kappa_d=O(p_1^{3/2} L \Lambda)$ \cite[Theorems 3.14 \& 3.16]{Conn2009}, where $p_1$ is the number of interpolation points needed.
So, using full space methods we have the standard values $\kappa_d=O(n^3 L_H \Lambda)$.

For our subspace models, we have $p_1=O(p^2/2)$ and $L=O(P_{\max}^3 L_H)$, and so we have $\kappa_d=O(p^3 P_{\max}^3 L_H \Lambda)$.
So, using JLTs as above with $p=O(\log r)$ and $P_{\max}=O(\sqrt{n/p})$ we get $\kappa_d=O(p^{3/2} n^{3/2} L_H \Lambda) = \tilde{O}(n^{3/2} L_H \Lambda)$.

That is, full space methods use $p=n$ with $\kappa_d=O(n^3)$ and subspace methods require $p=O(\log r) \leq O(\log n)$ with $\kappa_d=\tilde{O}(n^{3/2})$, a significant improvement for large $n$.
Specifically, using a complexity of $O(\kappa_d^5 \epsilon^{-3})$ from above, the dependency on $n$ decreases from $O(n^{9})$ iterations for full-space methods to $\tilde{O}(n^{4.5})$ for subspace methods.
This is summarized in Table~\ref{tab_constants_comparison}.

\begin{table}[tbh]
    \begin{center}
        \begin{tabular}{cccccc}
            \hline
            Method & $p$ & $P_{\max}$ & $\kappa_d$ & Iteration WCC & Evaluation WCC \\ \hline
            Full space & $n$ & $1$ & $O(n^3)$ & $O(n^{9} \epsilon^{-3})$ & $O(n^{11} \epsilon^{-3})$ \\
            Subspace & $O(\log r)$ & $\Theta(\sqrt{n/p})$ & $\tilde{O}(n^{1.5})$ & $\tilde{O}(n^{4.5}\epsilon^{-3})$ & $\tilde{O}(n^{4.5}\epsilon^{-3})$ \\
            \hline 
        \end{tabular}
        \caption{Summary of relevant constants for full space and subspace methods, based on $\kappa_d = O(p^3 P_{\max}^3)$, iteration complexity $O(\kappa_d^3 \epsilon^{-3})$ and evaluation complexity $O(p^2 \kappa_d^3 \epsilon^{-3})$ (WCC = worst-case complexity).}
        \label{tab_constants_comparison}
    \end{center}
\end{table}

\begin{remark}
    A limitation of the above theory is the requirement $\theta > 0$ \eqref{eq_theta_defn}.
    If we pick $\alpha\ll 1$ and assume $M=O(1)$, then we get $\alpha^2 = O(\epsilon/r)$.
    To apply the JLT, we actually need $p \geq O(\alpha^{-2}) = O(r \epsilon^{-1})$, which gives $p=O(\epsilon^{-1} r \log r)$.
    Hence for $\theta > 0$ to hold, we need to seek low accuracy solutions, or have a situation where $r\ll n$, i.e.~the problem has \emph{low effective rank}.
    This structure does arise in practice, see \cite[Section 1]{Cartis2022}, \LR{ \cite[Section 1]{Cartis-Massart2024}} and references therein for examples, with a notable example being finetuning of large language models, where scalable DFO methods are practically useful \cite{Malladi2023}.

    In the case where $P_k$ is generated using scaled Gaussians (i.e.~each entry of $P_k$ is i.i.d.~$N(0,p^{-1})$), then this can be improved (see \cite[Lemma 6.6]{Cartis2024} or \cite[Lemma 5.6.2]{Shao2021}).
    Then, we compute
    \begin{align}
        (\hat{\bv}_i^T \hat{\bv}_r)^2 = \|\hat{\bv}_r\|^2 \left(\hat{\bv}_i^T \left(\frac{\hat{\bv}_r}{\|\hat{\bv}_r\|}\right)\right)^2 \leq (1+\alpha)^2 O(p^{-1}),
    \end{align}
    since $\hat{\bv}_i^T \by$ is a $N(0,p^{-1})$-distributed random variable for any unit vector $\by$.
    In this case, we can get a different expression for $\theta$, namely
    \begin{align}
        \theta = (1-\alpha)^2 - \frac{M(r-1)(1+\alpha)^2 O(p^{-1})}{\epsilon (1-\alpha)^2},
    \end{align}
    and so for $\theta>0$ we still require either low accuracy solutions or low effective rank, via $\epsilon > O(r p^{-1})$, but without the stringent requirement on $\alpha$ (and hence on $p$).
\end{remark}

\section{Practical Implementation} \label{sec_implementation}
In \cite[Section 4]{Cartis2023} the algorithm DFBGN, a practical implementation of Algorithm~\ref{alg_rsdfo} for nonlinear least-squares problems was presented, based on linear interpolation models (for each residual in the least-squares sum).
DFBGN was designed to efficiently use objective evaluations, a key principle of DFO methods (which are often used when objective evaluations are slow/expensive to obtain).
In this section, we outline a RSDFO-Q (random subspace DFO with quadratic models), a practical implementation of the ideas in Algorithms~\ref{alg_rsdfo} and \ref{alg_rsdfo2}, for general objectives \eqref{eq_problem}, based on quadratic interpolation models. 

At the core of DFBGN is the maintenance of an interpolation set comprising $p+1$ points including the current iterate, say $\{\bx_k, \bx_k+\bd_1, \ldots, \bx_k+\bd_p\}$ for some linearly independent set $\bd_1,\ldots,\bd_p\in\R^n$.
Points that are added to this set are either from a trust-region step \eqref{eq_trs} or of the form $\bx_k+\Delta_k \bd$ for random (mutually orthogonal) unit vectors $\bd$, orthogonal to all current $\bd_i$ (which is optimal in a specific sense, see \cite[Theorem 4]{Chen2024}).
Points are removed from this set using geometric considerations to ensure the full linearity of the resulting model, following the approach in \cite{Powell2009,Cartis2019}.
Given this interpolation set, the subspace $P_k$ in Algorithm~\ref{alg_rsdfo2} is effectively defined by $P_k=[\bd_1 \: \cdots \: \bd_p]$.

For a general objective problem \eqref{eq_problem}, quadratic interpolation models, with at least $p+2$ interpolation points (including $\bx_k$), are typically preferred to linear interpolation models.
However, in this situation we lose the important feature of DFBGN, namely that \emph{every interpolation point (except $\bx_k$) defines a direction in the subspace}.
In this section, we give a practical approach to avoiding this issue, and provide a subspace method for solving the general problem \eqref{eq_problem} based on (underdetermined) quadratic interpolation models.

As with DFBGN, the design of RSDFO-Q is based on the principles:

\begin{itemize}
    \item Low linear algebra cost: the per-iteration linear algebra cost should be linear in $n$ (but may be higher in $p$, since the algorithm is designed for $p\ll n$)
    \item Efficient use of objective evaluations: objective evaluations should be re-used where possible, while restricting exploration at each iteration to a $p$-dimensional subspace. In particular, the algorithm with $p=n$ should have broadly comparable performance to state-of-the-art (full-space) DFO methods.\footnote{Given the added flexibility of the subspace method to set $p\ll n$, and their resulting ability to handle problems with much larger $n$, it is unlikely that a subspace method with $p=n$ will match/outperform state-of-the-art full-space methods.}
\end{itemize}

The key idea to extend the DFBGN approach to quadratic models is to maintain two sets of interpolation points.
To work in a $p$-dimensional subspace with at most $p+2 \leq q \leq (p+1)(p+2)/2$ interpolation points, we maintain a `primary' set of $p+1$ interpolation points $\mathcal{Y}_1=\{\bx_k,\bx_k+\bd_1,\ldots,\bx_k+\bd_p\}$, which define the current subspace $P_k=[\bd_1 \: \cdots \: \bd_p]$ just as in DFBGN.
We separately maintain a `secondary' set $\mathcal{Y}_2$ of at most $q-p-1$ interpolation points, which is disjoint from the primary set.
Points in the secondary set are only used for model construction, whereas points in the primary set are used for model construction as well as defining the subspace $P_k$ and geometry management.
Whenever points are removed from the primary set, which is based on similar principles to DFBGN, those points are added to the secondary set (and the oldest points from the secondary set are deleted).
We note that the case $q=(p+1)(p+2)/2$ in general can yield fully quadratic (subspace) models, as required for Algorithm~\ref{alg_rsdfo2}, and smaller values of $q$ can yield fully linear (subspace) models, as required for Algorithm~\ref{alg_rsdfo}; see \cite[Chapters 3\& 5]{Conn2009} for details.

Given the subspace defined by $P_k$, we work with an orthonormal basis $Q_k$ for the same space for convenience.
By construction of $P_k$, all the primary interpolation points are in the subspace, but the secondary points likely will not be.
Hence, before building an interpolation model, we project all secondary interpolation points into the subspace via
\begin{align}
    \hat{\bs} = Q_k Q_k^T (\by-\bx_k), \qquad \forall \by\in \mathcal{Y}_2.
\end{align}
For notational convenience, we have $\hat{\bs}=\by-\bx_k$ for all $\by\in\mathcal{Y}_1$.
Given these perturbed points, the subspace model construction for \eqref{eq_model_subspace} is based on minimum Frobenius norm quadratic interpolation \cite{Powell2004}.
That is, we solve the equality-constrained QP
\begin{align}
    \min_{\hat{\bg}, \hat{H}} \|\hat{H} - \widetilde{H}_{k-1}\|_F^2 \qquad \text{s.t.} \quad \hat{H}=\hat{H}^T, \quad \hat{m}(\hat{\bs})=f(\by), \: \forall \by\in \mathcal{Y}_1\cup\mathcal{Y}_2, \label{eq_min_frob_subspace}
\end{align}
where for any $\by\in \mathcal{Y}_1\cup\mathcal{Y}_2$ we have the associated $\hat{\bs}$ defined above, and $\widetilde{H}_{k-1} := Q_k^T Q_{k-1} \hat{H}_{k-1} Q_{k-1}^T Q_k$ is the projection of $\hat{H}_{k-1}$ into the current subspace (with $\hat{H}_{-1}=0$).

\begin{algorithm}[H]
\begin{algorithmic}[1]
\Statex \textbf{Inputs:} initial iterate $\bx_0 \in \R^n$, subspace dimension $p\in\{1,\ldots,n\}$, maximum number of interpolation points $p+2 \leq q \leq (p+1)(p+2)/2$.
\Statex \underline{Parameters:} initial trust-region radius $\Delta_0>0$, maximum trust-region radius $\Delta_{\max}$, trust-region radii thresholds $0 < \gamma_S,\gammadec < 1 < \gammainc \leq \overgammainc$ and $0 < \alpha_1 \leq \alpha_2 < 1$ and acceptance thresholds $0 < \eta_1 \leq \eta_2 < 1$, and minimum steps between $\rho_k$ reductions $N\in\N$.
\vspace{0.2em}
\State Define the primary interpolation set $\mathcal{Y}_1 := \{\bx_0,\bx_0+\Delta_0 \bd_1\ldots,\bx_0+\Delta_0 \bd_p\}$ for $p$ random orthonormal vectors $\bd_1,\ldots,\bd_p$, and secondary interpolation set $\mathcal{Y}_2 = \emptyset$.
\State Set $H_{-1}=0\in\R^{p\times p}$, $Q_{-1}\in\R^{n\times p}$ arbitrary and $\rho_0=\Delta_0$.
\For{$k=0,1,2,\ldots$}
    \State Build $Q_k\in\R^{n\times p}$ with orthonormal columns forming a basis for $\{\by-\bx_k : \by \in \mathcal{Y}_1\setminus\{\bx_k\}\}$.
    \State Build subspace quadratic model $\hat{m}_k$ \eqref{eq_model_subspace} using minimum Frobenius norm quadratic interpolation \eqref{eq_min_frob_subspace}.
    \State Calculate a step $\hat{\bs}_k$ by approximately solving the trust-region subproblem \eqref{eq_trs}.
    \State Set \texttt{CAN\_REDUCE\_RHO=TRUE} if $k\geq N$, $\rho_{k-N}=\rho_k$ and $\min(\|\hat{\bs}_j\|,\Delta_j) \leq \rho_j$ for all $j=k-N,\ldots,k$ and \texttt{FALSE} otherwise.
    \If{$\|\hat{\bs}_k\| < \gamma_S \rho_k$}
        \State (\textit{safety}) Set $R_k=-1$ and $\Delta_{k+1} = \max(\gammadec \Delta_k, \rho_k)$ and $\bx_{k+1}=\bx_k$.
        \State If \texttt{CAN\_REDUCE\_RHO=FALSE} or $\Delta_k > \rho_k$, then remove 1 point from $\mathcal{Y}_1$ using Algorithm~\ref{alg_remove_single_point}. \label{ln_safety_remove}
    \Else
        \State Evaluate $f(\bx_k+Q_k\hat{\bs}_k)$ and calculate the ratio $R_k$ \eqref{eq_ratio_test}.
        \State Update trust-region radius
        \begin{align}
            \Delta_{k+1} = \begin{cases} \max(\min(\gammadec \Delta_k, \|\hat{\bs}_k\|), \rho_k), & R_k < \eta_1, \\ \max(\gammadec \Delta_k, \|\hat{\bs}_k\|, \rho_k), & \eta_1 \leq R_k  \leq \eta_2, \\ \min(\max(\gammainc \Delta_k, \overgammainc \|\hat{\bs}_k\|), \Delta_{\max}), & R_k > \eta_2. \end{cases}
        \end{align}  
        \State Set $\bx_{k+1} = \bx_k+Q_k\hat{\bs}_k$ if $R_k > 0$ and $\bx_{k+1}=\bx_k$ otherwise.
        \If{$p<n$}
            \State Set $\mathcal{Y}_1=\mathcal{Y}_1 \cup \{\bx_k + Q_k\hat{\bs}_k\}$.
            \State Move $\min(\max(\pdrop,2),p)$ points from $\mathcal{Y}_1$ to $\mathcal{Y}_2$ with Algorithm~\ref{alg_remove_single_point}. \label{ln_pdrop1} 
        \Else
            \State Move one point from $\mathcal{Y}_1$ to $\mathcal{Y}_2$ using Algorithm~\ref{alg_remove_single_point} and set $\mathcal{Y}_1 = \mathcal{Y}_1 \cup \{\bx_k+Q_k\hat{\bs}_k\}$. \label{ln_full_set_remove} 
            \State Move $\min(\max(\pdrop,1),p)$ points from $\mathcal{Y}_1$ to $\mathcal{Y}_2$ with Algorithm~\ref{alg_remove_single_point}. \label{ln_pdrop2}
        \EndIf
    \EndIf
    \State (\textit{reduce $\rho_k$}) If $R_k < 0$, $\Delta_k \leq \rho_k$, and \texttt{CAN\_REDUCE\_RHO=TRUE}, then set $\rho_{k+1}=\alpha_1 \rho_k$ and $\Delta_{k+1}=\alpha_2 \rho_k$. Otherwise, set $\rho_{k+1}=\rho_k$.  
    \State (\textit{add new points}) Let $\padd := p + 1 - |\mathcal{Y}_1|$ and generate random orthonormal vectors $\{\bd_1,\ldots,\bd_{\padd}\}$ that are also orthogonal to $\{\by-\bx_{k+1} : \by\in \mathcal{Y}_1\setminus\{\bx_{k+1}\}\}$.
    \State Set $\mathcal{Y}_1 = \mathcal{Y}_1 \cup \{\bx_{k+1} + \Delta_{k+1}\bd_1, \ldots, \bx_{k+1} + \Delta_{k+1}\bd_{\padd}\}$, and let $\bx_{k+1}$ be the point in $\mathcal{Y}_1$ with smallest objective value.
\EndFor
\end{algorithmic}
\caption{RSDFO-Q (Random subspace DFO with quadratic models) for \eqref{eq_problem}.}
\label{alg_subspace_bobyqa}
\end{algorithm}
\clearpage

A complete description of RSDFO-Q is given in Algorithm~\ref{alg_subspace_bobyqa}.
Aside from the more complex model construction in \eqref{eq_min_frob_subspace}, it closely follows the overall structure of DFBGN \cite{Cartis2023}.
The trust-region updating mechanism is more complex than DFBGN, as it employs a lower bound $\rho_k \leq \Delta_k$, where $\rho_k$ is only periodically reduced, and does not evaluate the objective at the tentative trust-region step if the step is too short.
Both these complications are drawn from BOBYQA \cite{Powell2009}.

There are two situations where points are to be removed from the primary interpolation set $\mathcal{Y}_1$.
The first is where a single point is removed (lines~\ref{ln_safety_remove} and \ref{ln_full_set_remove}), and the second is where potentially multiple points are removed (lines~\ref{ln_pdrop1} and \ref{ln_pdrop2}).
In the case where a single point is removed, Algorithm~\ref{alg_remove_single_point} gives details on how this is done.
In the case where multiple points are removed, \cite[Algorithm 4]{Cartis2023} is used to select the points to remove.
These two approaches are very similar, differing only in whether the tentative trust-region step is used in the Lagrange polynomial evaluation: they try to remove points which are far from the current iterate and/or degrade the geometry of the primary interpolation set.
In both cases, the points removed from $\mathcal{Y}_1$ are added to $\mathcal{Y}_2$ (with the oldest points removed from $\mathcal{Y}_2$ if it grows too large).
Following the heuristic for DFBGN \cite[Section 4.5]{Cartis2023}, the number of points to remove in lines~\ref{ln_pdrop1} and \ref{ln_pdrop2} is $p/10$ for unsuccessful steps ($R_k<0$) and 1 otherwise.

\begin{remark}
    The default parameter choices in Algorithm~\ref{alg_subspace_bobyqa} are $\Delta_0=0.1\max(\|\bx_0\|_{\infty},1)$, $\Delta_{\max}=10^{10}$, $\gamma_S=\gammadec=0.5$, $\gammainc=2$, $\overgammainc=4$, $\alpha_1=0.1$, $\alpha_2=0.5$, $\eta_1=0.1$, $\eta_2=0.7$, and $N=5$.
\end{remark}

\begin{algorithm}[tb]
\begin{algorithmic}[1]
\Statex \textbf{Inputs:} primary interpolation set $\mathcal{Y}_1$ with $p+1$ points including the current iterate $\bx_k$, secondary interpolation set $\mathcal{Y}_2$ with at most $q-p-1$ points, tentative trust-region step $Q_k\hat{\bs}_k$, trust-region radius $\Delta_k>0$
\vspace{0.2em}
\State Compute the linear Lagrange polynomials for all points in $\mathcal{Y}_1=\{\bx_{k+1},\by_1,\ldots,\by_p\}$
\State Let $\by_t\in\mathcal{Y}_1\setminus\{\bx_{k+1}\}$ be the point that maximizes
\begin{align}
    \theta_t = |\ell_t(\bx_k+Q_k\hat{\bs}_k)| \cdot \max\left(\frac{\|\by_t-\bx_k\|^4}{\Delta_k^4}, 1\right).
\end{align}
\State Set $\mathcal{Y}_1=\mathcal{Y}_1\setminus\{\by_t\}$ and $\mathcal{Y}_2=\mathcal{Y}_2\cup\{\by_t\}$
\State If $|\mathcal{Y}_2| > q-p-1$, remove the oldest point from $\mathcal{Y}_2$.
\end{algorithmic}
\caption{Remove a single point from $\mathcal{Y}_1$.}
\label{alg_remove_single_point}
\end{algorithm}

\section{Numerical Results} \label{sec_numerics}

We now demonstrate the numerical performance of RSDFO-Q in light of the underlying principles described in Section~\ref{sec_implementation}. 
In particular, we benchmark against Py-BOBYQA \cite{Cartis2019b} as a state-of-the-art full space DFO method based on the same minimum Frobenius norm quadratic interpolation mechanism.
It also shares many of the complexities in the trust-region updating mechanism that are present in Algorithm~\ref{alg_subspace_bobyqa}.

\subsection{Testing Framework}
We test our solvers on two collections of problems from the CUTEst set~\cite{Gould2015,Fowkes2022}:
\begin{itemize}
    \item The medium-scale collection `CFMR' of 90 problems with $25 \leq n \leq 120$ (with $n=100$ most common) from \cite[Section 7]{Cartis2019b}.
    \item The large-scale collection `CR-large' of 28 problems with $1000 \leq n \leq 5000$ (with $n=1000$ most common) from \cite[Appendix C]{Cartis2023}.
\end{itemize}
We terminate all solvers after $100(n+1)$ objective evaluations and 12 hours (wall) runtime, whichever occurs first\footnote{All solvers are implemented in Python with key linear algebra routines implemented in NumPy. The objective evaluation time for all problems was negligible.}, and run each randomized solver 10 times on each problem (from the same starting point) to get a good assessment of the average-case behavior.
All solvers had the same starting point and initial trust-region radius.

For each problem instance and solver, we measure the total objective evaluations required to find a point $\bx$ with $f(\bx) \leq f_{\min} + \tau(f(\bx_0) - f_{\min})$, where $f_{\min}$ is the true minimum listed in \cite{Cartis2019b,Cartis2023} and $\tau\ll 1$ is an accuracy measure (smaller values mean higher accuracy solutions).
In our results, we show accuracy levels $\tau\in\{10^{-1},10^{-3}\}$, as it was observed in \cite[Section 5]{Cartis2023} that subspace methods are most suited to finding lower-accuracy solutions.
If a solver terminates on budget or runtime before a sufficiently accurate point $\bx$ was found, the total objective evaluations is considered to be $\infty$.

Solvers are compared using data profiles \cite{More2009}, showing the proportion of problem instances solved within a given number of evaluations in units of $(n+1)$ for problems in $\R^n$ (i.e.~the cost of estimating a full gradient using finite differencing), and performance profiles \cite{Dolan2002}, which plot the proportion of problem instances solved within a given ratio of the minimum evaluations required by any instance of any solver.

\subsection{Medium-Scale Results}
We first show results for medium-scale problems based on the `CFMR' problem collection ($n\approx 100$).\footnote{The 12hr runtime limit was not reached for any of these problem instances.}
In Figure~\ref{fig_cutest100}, we compare Py-BOBYQA with $q\in\{n+1,n+2,2n+1\}$ interpolation points (i.e.~linear or underdetermined quadratic interpolation models) against RSDFO-Q with $q=2p+1$ interpolation points, with subspace dimension $p\in\{n/10, n/4, n/2, n\}$.

We see that the full-space method Py-BOBYQA is the best-performing algorithm, with larger numbers of interpolation points $q$ giving the best performance.
Among the subspace methods, those with larger choices of $p$ achieve the best performance, aligning with the results in \cite{Cartis2023}.
In particular (c.f.~principles in Section~\ref{sec_implementation}), we see that RSDFO-Q with full blocks $p=n$ (and $q=2p+1$ interpolation points) achieves broadly comparable performance to Py-BOBYQA (worse than Py-BOBYQA with $q=2n+1$ points but better than $q=n+2$ points), while having the significant additional flexibility of working in a subspace of arbitrary dimension $p$.


\begin{figure}[tbh]
\centering
\subfloat[Data profile, $\tau=10^{-1}$.]{%
\resizebox*{7cm}{!}{\includegraphics{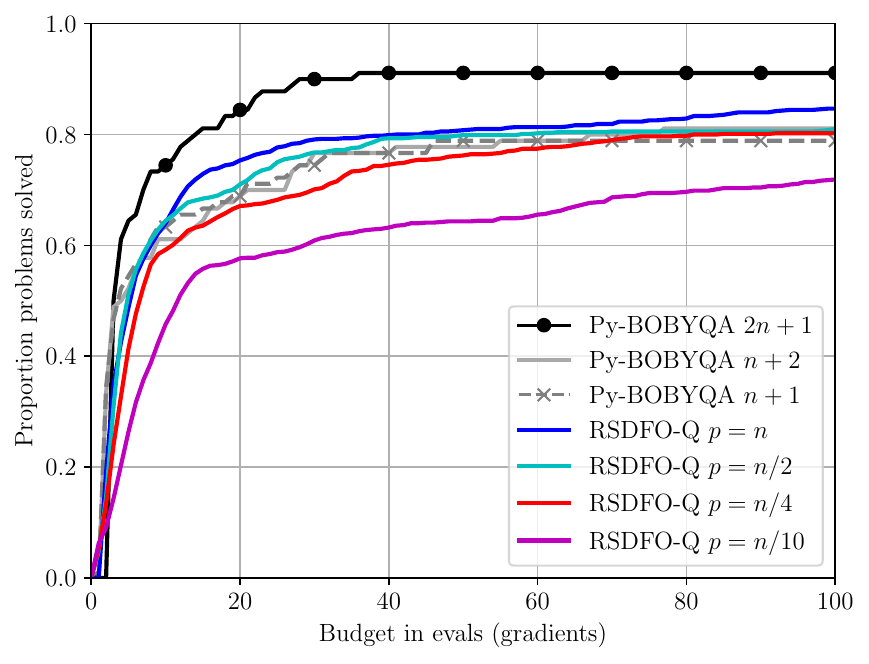}}}\hspace{5pt}
\subfloat[Performance profile, $\tau=10^{-1}$.]{%
\resizebox*{7cm}{!}{\includegraphics{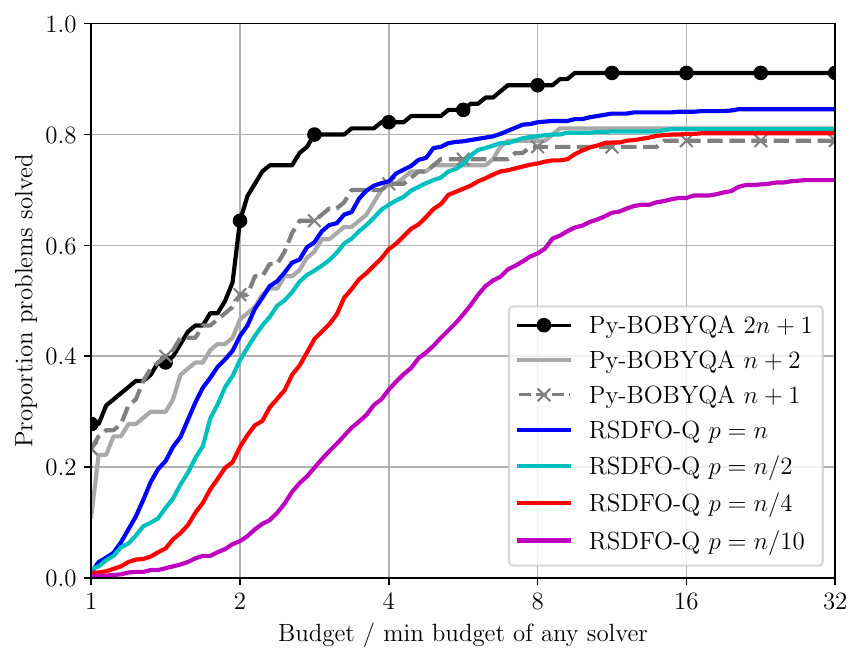}}}
\\
\subfloat[Data profile, $\tau=10^{-3}$.]{%
\resizebox*{7cm}{!}{\includegraphics{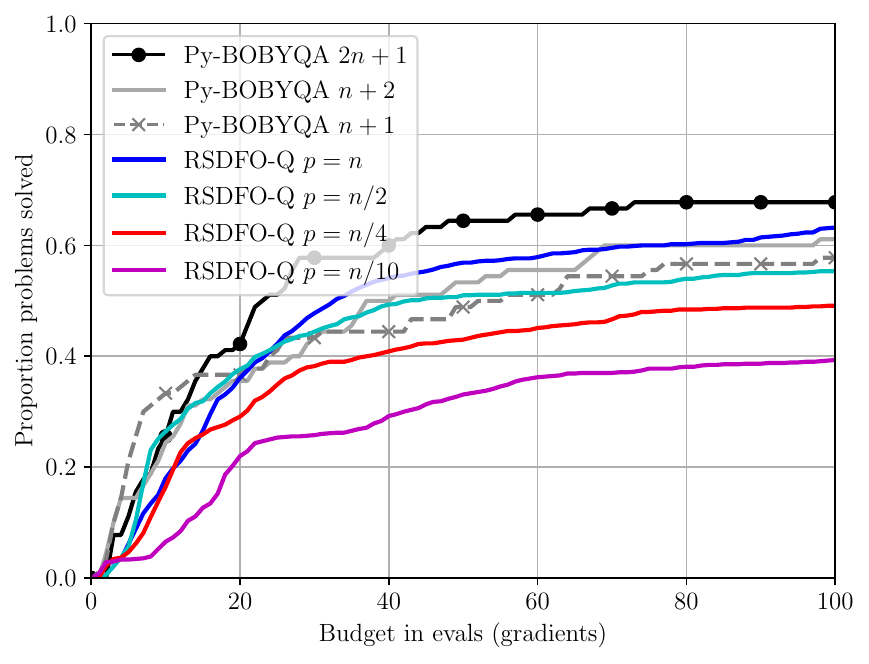}}}\hspace{5pt}
\subfloat[Performance profile, $\tau=10^{-3}$.]{%
\resizebox*{7cm}{!}{\includegraphics{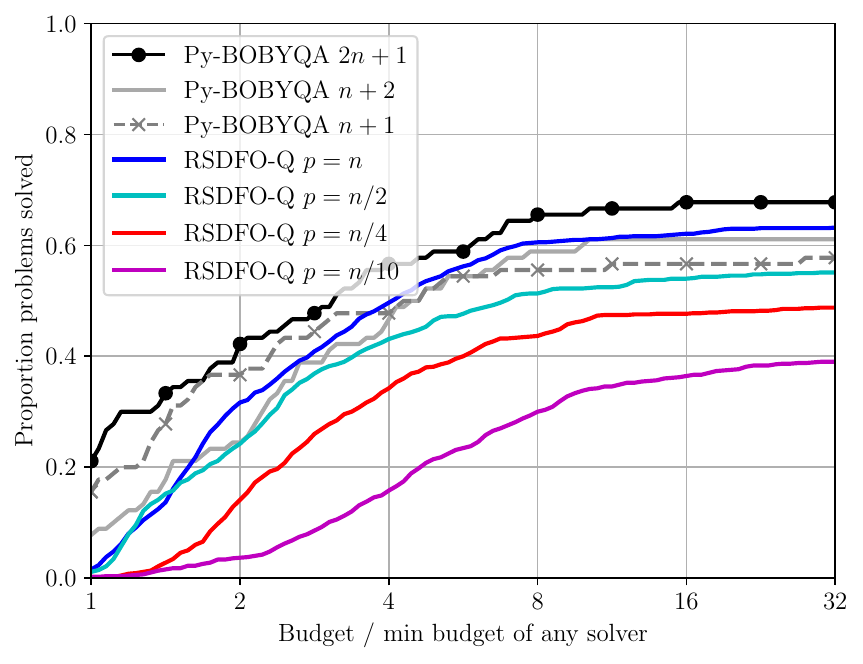}}}
\caption{Comparisons for medium-scale problems, $n\approx 100$.} \label{fig_cutest100}
\end{figure}

\subsection{Large-Scale results}
In Figure~\ref{fig_cutest1000} we show the same results (based on total objective evaluations) as Figure~\ref{fig_cutest100}, but for the large-scale test set `CR-large' ($n\approx 1000$).
Here, we observe that Py-BOBYQA with quadratic models ($q=2n+1$ and $q=n+2$) fail to solve any problems, as their significant per-iteration linear algebra costs\footnote{e.g.~Model construction in each iteration of Py-BOBYQA with $q>n+1$ requires the solution of a $(q+n+1)\times(q+n+1)$ linear system.} mean they hit the runtime limit quickly.
The only Py-BOBYQA variant able to make progress uses linear models with $q=n+1$.
However, this method is outperformed by all variants of the subspace method RSDFO-Q (including with $p=n$).
We find that medium-dimensional subspaces $p\in\{n/10,n/4\}$ (i.e.~$p\approx 100$ and $p\approx 250$) are the best-performing variants.

For comparison, in Figure~\ref{fig_cutest1000_runtime}, we show data and performance profiles based on runtime to find a point with sufficient decrease.
Among the variants of RSDFO-Q, we find that the low-dimensional variants $p\in\{n/10, n/100\}$ are the best-performing, at least initially.
However, the increased robustness of the $p=n/4$ variant (accompanied by an increase in the per-iteration linear algebra cost) means it ultimately outperforms the $p=n/100$ variant.

\begin{figure}[tbh]
\centering
\subfloat[Data profile, $\tau=10^{-1}$.]{%
\resizebox*{7cm}{!}{\includegraphics{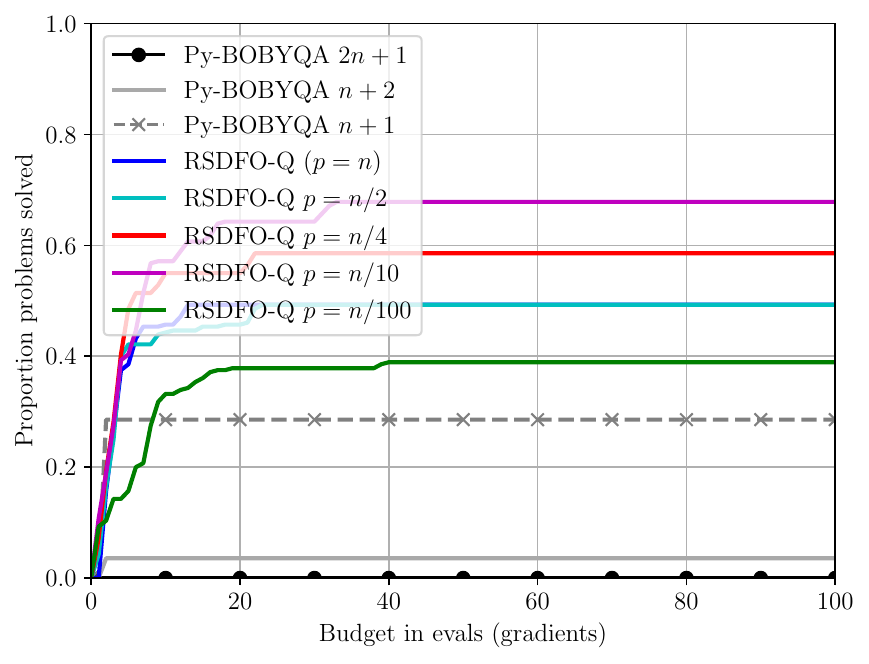}}}\hspace{5pt}
\subfloat[Performance profile, $\tau=10^{-1}$.]{%
\resizebox*{7cm}{!}{\includegraphics{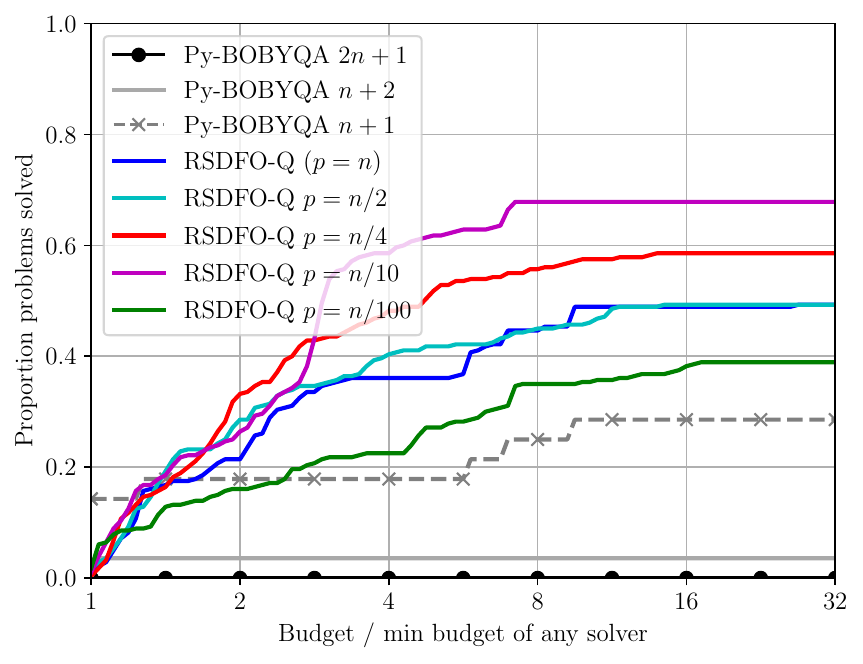}}}
\\
\subfloat[Data profile, $\tau=10^{-3}$.]{%
\resizebox*{7cm}{!}{\includegraphics{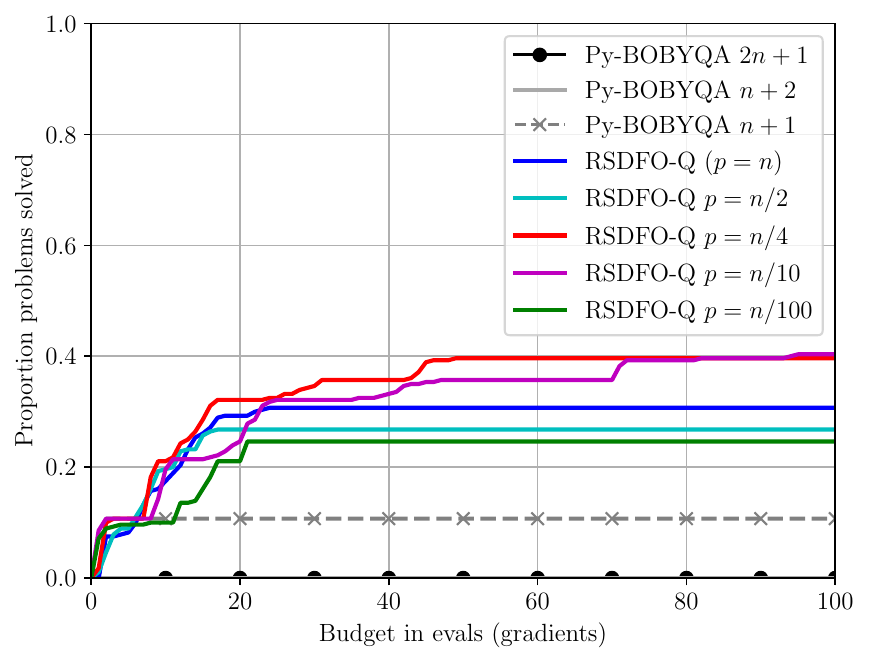}}}\hspace{5pt}
\subfloat[Performance profile, $\tau=10^{-3}$.]{%
\resizebox*{7cm}{!}{\includegraphics{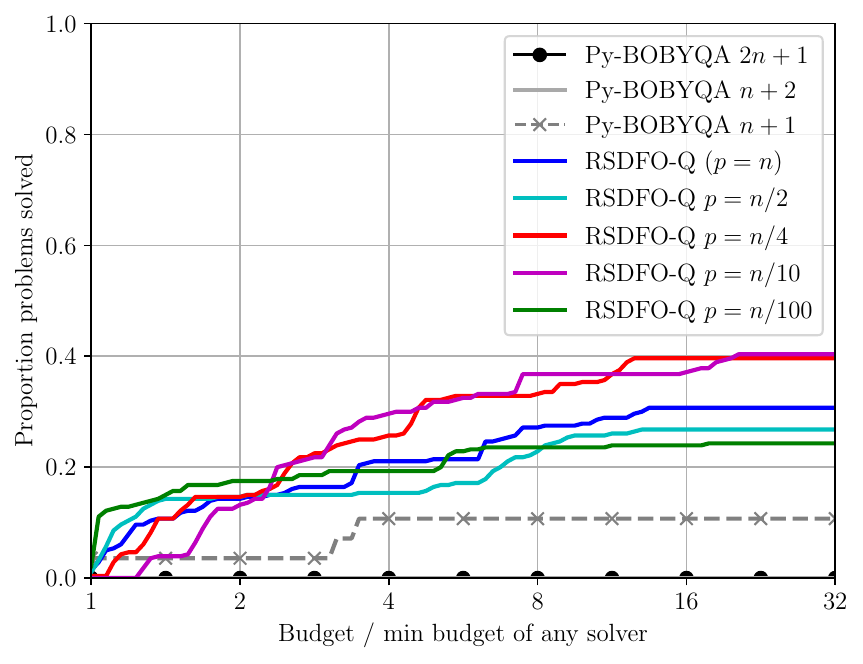}}}
\caption{Comparisons for large-scale problems, $n\approx 1000$. Runtime capped at 12hrs per problem instance for all solvers.} \label{fig_cutest1000}
\end{figure}

\begin{figure}[tbh]
\centering
\subfloat[Data profile, $\tau=10^{-1}$.]{%
\resizebox*{7cm}{!}{\includegraphics{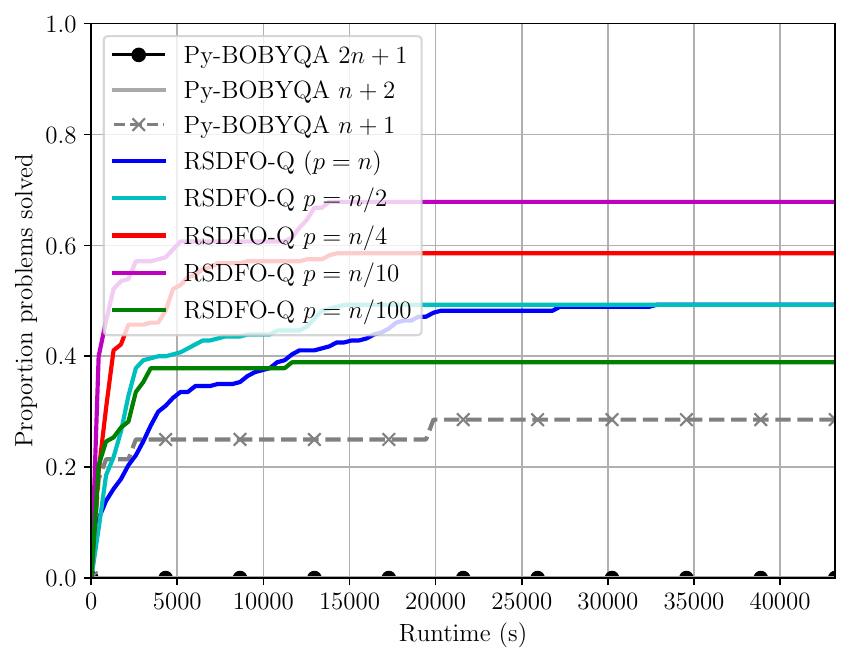}}}\hspace{5pt}
\subfloat[Performance profile, $\tau=10^{-1}$.]{%
\resizebox*{7cm}{!}{\includegraphics{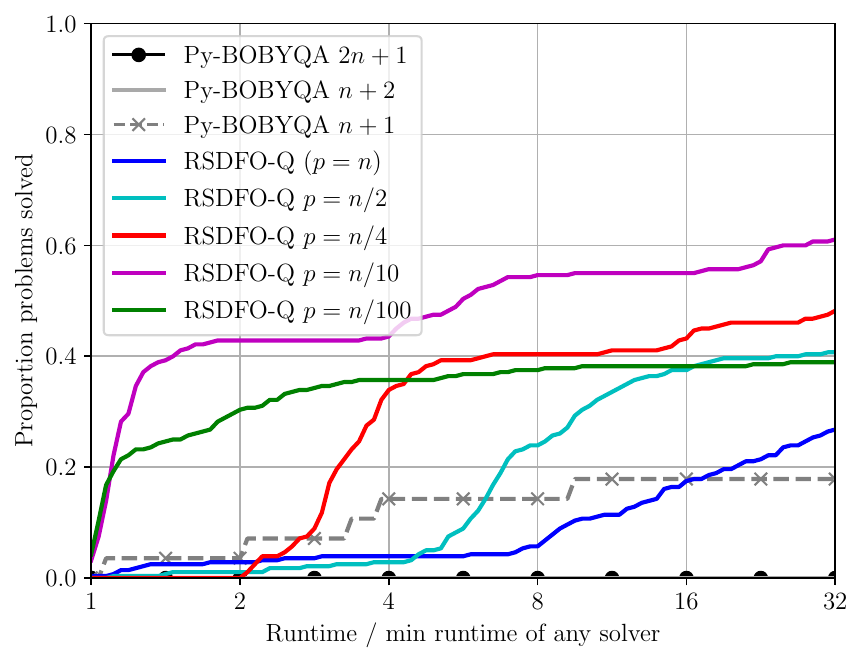}}}
\\
\subfloat[Data profile, $\tau=10^{-3}$.]{%
\resizebox*{7cm}{!}{\includegraphics{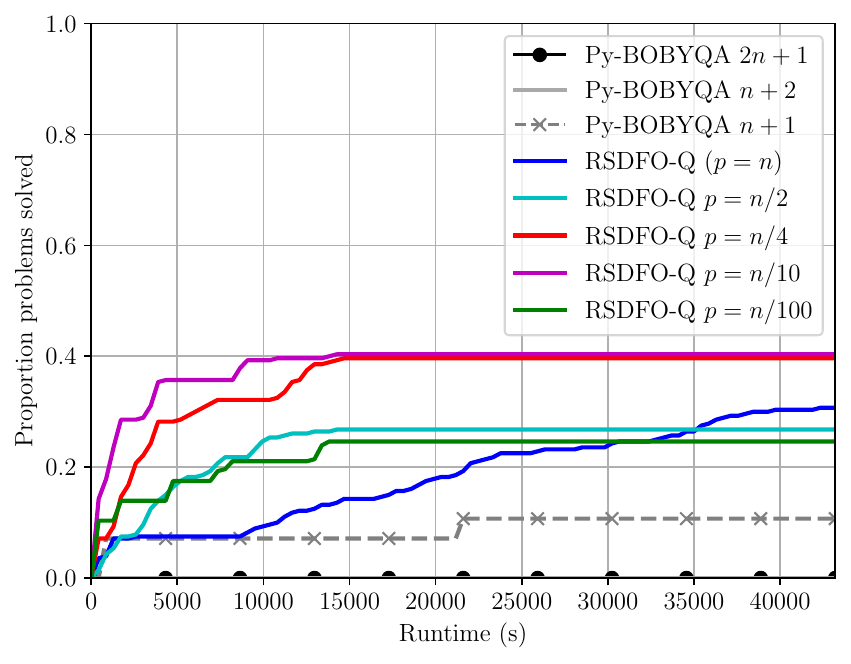}}}\hspace{5pt}
\subfloat[Performance profile, $\tau=10^{-3}$.]{%
\resizebox*{7cm}{!}{\includegraphics{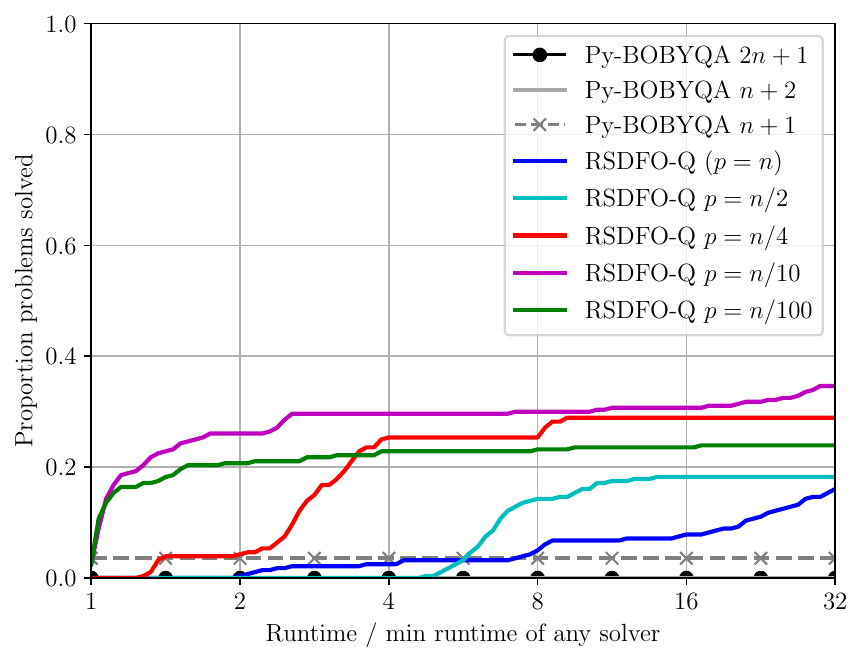}}}
\caption{Comparisons for large-scale problems, $n\approx 1000$, based on runtime required to solve the problem (rather than objective evaluations). Runtime capped at 12hrs per problem instance for all solvers.} 
\label{fig_cutest1000_runtime}
\end{figure}

\section{Conclusion}
We have provided the first second-order convergence result for a random subspace model-based DFO method, and shown that, compared to full space model-based DFO, these approaches can achieve a significant improvement in dimension dependence of the complexity bound, from $O(n^{11} \epsilon^{-3})$ evaluations to $\tilde{O}(n^{4.5} \epsilon^{-3})$ evaluations to find an $\epsilon$-approximate second-order critical point for an $n$-dimensional problem.
This theory is relevant particularly when seeking low accuracy solutions or if the objective has low effective rank (i.e.~low rank Hessians).

We then introduce a practical algorithm (RSDFO-Q) for general unconstrained model-based DFO in subspaces based on iterative construction of quadratic models in subspaces.
This extends the practical method from \cite{Cartis2023}, which was limited to linear models (and specialized to nonlinear least-squares problems).
Numerical results compared to the state-of-the-art method Py-BOBYQA show that RSDFO-Q can solve problems of significantly higher dimension ($n\approx 1000$), exploiting a significantly reduced per-iteration linear algebra cost, while RSDFO-Q with full-dimensional subspaces achieves comparable performance to Py-BOBYQA on medium-scale problems ($n\approx 100$).

Interesting extensions of this approach would be to consider the case of stochastic function evaluations, similar to \cite{Dzahini2024b} for the first-order theory, and a practical comparison of RSDFO-Q for large language model fine-tuning problems, which typically have low effective rank \cite{Malladi2023}.

\section*{Acknowledgement(s)}
We acknowledge the use of the University of Oxford Advanced Research Computing (ARC) facility\footnote{\url{http://dx.doi.org/10.5281/zenodo.22558}} in carrying out this work.


\section*{Funding}

LR was supported by the Australian Research Council Discovery Early Career Award DE240100006. CC was supported by the Hong Kong Innovation and Technology Commission (InnoHK CIMDA Project).




\bibliographystyle{tfs}
\bibliography{refs}


\appendix






\end{document}